\newcommand{\beq}[1]{\begin{equation}\label{eq:#1}}
	\newcommand{\eeq}{\end{equation}}
\newtheorem{theorem}{Theorem}[section]
\newtheorem{notation}[theorem]{Notation}
\newtheorem{proposition}[theorem]{Proposition}
\theoremstyle{definition}
\newtheorem{definition}[theorem]{Definition}
\theoremstyle{remark}
\newtheorem{remark}[theorem]{Remark}
\numberwithin{equation}{section}
\def\G{{\mathbb{G}}}
\def\R{{\mathcal{R}}}
\def\L{{^{2}_{L^2(\G)}}}
\newcommand{\vertiii}[1]{{\left\vert\kern-0.25ex\left\vert\kern-0.25ex\left\vert #1 
    \right\vert\kern-0.25ex\right\vert\kern-0.25ex\right\vert}}
\begin{document}

	\setcounter{page}{1}
	
\title[Hypoelliptic Klein-Gordon equation with singular mass]{Fractional Klein-Gordon equation with singular mass. II: Hypoelliptic case}
	
\author[M. Chatzakou]{Marianna Chatzakou}
\address{
	Marianna Chatzakou:
	\endgraf
    Department of Mathematics: Analysis, Logic and Discrete Mathematics
    \endgraf
    Ghent University, Belgium
  	\endgraf
	{\it E-mail address} {\rm marianna.chatzakou@ugent.be}
		}

\author[M. Ruzhansky]{Michael Ruzhansky}
\address{
  Michael Ruzhansky:
  \endgraf
  Department of Mathematics: Analysis, Logic and Discrete Mathematics
  \endgraf
  Ghent University, Belgium
  \endgraf
 and
  \endgraf
  School of Mathematical Sciences
  \endgraf
  Queen Mary University of London
  \endgraf
  United Kingdom
  \endgraf
  {\it E-mail address} {\rm michael.ruzhansky@ugent.be}
  }

\author[N. Tokmagambetov]{Niyaz Tokmagambetov}
\address{
  Niyaz Tokmagambetov:
  \endgraf
  Department of Mathematics: Analysis, Logic and Discrete Mathematics
  \endgraf
  Ghent University, Belgium
  \endgraf
  and
    \endgraf   
    Al--Farabi Kazakh National University
    \endgraf
    Almaty, Kazakhstan
    \endgraf
  {\it E-mail address} {\rm niyaz.tokmagambetov@ugent.be, tokmagambetov@math.kz}
  }

\thanks{The authors are supported by the FWO Odysseus 1 grant G.0H94.18N: Analysis and Partial Differential Equations and by the Methusalem programme of the Ghent University Special Research Fund (BOF) (Grant number 01M01021). Michael Ruzhansky is also supported by EPSRC grant EP/R003025/2. \\
{\it Keywords:} Klein--Gordon equation; Rockland operator; Cauchy problem; graded Lie group; weak solution; singular mass; very weak solution; regularisation}

\dedicatory{Dedicated to Vladimir Rabinovich on the occasion of his 80th birthday}

\begin{abstract} 
In this paper we consider a fractional wave equation for hypoelliptic operators with a singular mass term depending on the spacial variable and prove that it has a very weak solution. Such analysis can be conveniently realised in the setting of graded Lie groups. The uniqueness of the very weak solution, and the consistency with the classical solution are also proved, under suitable considerations.
This extends and improves the results obtained in the first part \cite{ARST21a} which was devoted to the classical Euclidean Klein-Gordon  equation.
\end{abstract}

\maketitle

\tableofcontents

\section{Introduction}
The aim of this paper is to contribute to the study of the Klein-Gordon equation for positive (left) Rockland operator $\mathcal{R}$  (left-invariant hypoelliptic partial differential operator which is homogeneous of positive degree $\nu$) on a general graded Lie group $\mathbb{G}$, with a possibly singular mass term depending on the spacial variable; that is for $T>0$, and for $s>0$ we consider the Cauchy problem
\begin{equation}
\label{kg.eq}
\begin{cases}
u_{tt}(t,x) +\mathcal{R}^{s}u(t,x)+m(x)u(t,x)=0\,,\quad (t,x)\in [0,T]\times \mathbb{G}\,,\\
u(0,x)=u_0(x)\,,u_{t}(0,x)=u_1(x),\; x \in \G\,,	
\end{cases}       
\end{equation}
where $m$ is a non-negative and possibly singular function/distribution. 

The setting of Rockland operators on graded Lie groups allows one to consider both elliptic and subelliptic settings in \eqref{kg.eq}. The simplest example is that of the standard Klein-Gordon equation, when we take $\G=\mathbb R^d$ to be the Euclidean space, and $\mathcal{R}=-\Delta$ to be the Laplacian on $\mathbb R^d$. However, already on $\mathbb R^d$, the setting of \eqref{kg.eq} allows one to consider more general evolutions, for example, taking
$$
\mathcal{R}= (-1)^m\sum_{j=1}^d\frac{\partial^{2m}}{\partial x_j^{2m}},
$$ 
for any integer $m$. Such operators are also Rockland operators on $\mathbb R^d$, as we explain in the next section. However, the general setting of \eqref{kg.eq} allows one to also consider hypoelliptic operators. The simplest example would be $\G$ being the Heisenberg group, and $\mathcal{R}$ the positive sub-Laplacian on it. More generally, if $\G$ is any stratified group (or a homogeneous Carnot group), and $X_1,\ldots,X_N$ are the generators of its Lie algebra (satisfying the H\"ormander condition), we can consider 
$$
\mathcal{R}= (-1)^m\sum_{j=1}^N X_j^{2m},
$$ 
for any integer $m$, where we understand $X_j$ also as the derivative with respect to the vector field $X_j$.

The main feature of \eqref{kg.eq} is that we will not assume any regularity on the mass coefficient $m$. Especially, we are interested in irregular $m$, for example being $\delta$-distribution, or even $\delta^2$, if understood appropriately in the sense of multiplication of distributions. We note that in this situation the usual notion of weak solutions is not applicable to \eqref{kg.eq} in view of the Schwartz impossibility result 
\cite{Sch54} on the multiplication of distributions.

Thus, in this paper we work with the concept of very weak solutions. More specifically, we will show its applicability to the Cauchy problem \eqref{kg.eq} for the Klein-Gordon equation for the Rockland operator $\mathcal{R}$ on the graded Lie group $\mathbb{G}$ with a singular mass depending on the spacial variable. This concept was introduced in \cite{GR15} to deal with the Schwartz impossibility result about multiplication of distributions \cite{Sch54}, in the context of wave type equations with singular coefficients. Later, this analysis was applied to other hyperbolic type equations with singular coefficients \cite{RT17a}, \cite{RT17b}, \cite{ART19}, and \cite{MRT19}. 
The wave type equations with time-dependent coefficients on graded Lie groups were analysed in \cite{RT21} for H\"older coefficients, and in \cite{RY20} for distributional time-dependent coefficients, using the notion of very weak solutions. 
All these works deal with the time-dependent equations and in the recent papers \cite{Gar20}, \cite{ARST21a}, \cite{ARST21b}, and \cite{ARST21c}, the authors start to develop the notion of very weak solutions for equations with (irregular) space-depending coefficients.

The present paper is the extension and improvement of the results obtained in the first part \cite{ARST21a} which was devoted to the classical Klein-Gordon equation. In fact, the setting of \cite{ARST21a} was the equation \eqref{kg.eq} for $\G=\mathbb R^d$ and $\mathcal{R}=-\Delta$ being the positive Laplacian on the Euclidean space. Consequently, the results here contain the results of \cite{ARST21a} as a special case, and we also use this chance to slightly correct the consistency statement given in that paper, see Remark \ref{finrem}, as well as a clarifying Remark \ref{rem1p}.

\section{Preliminaries}
Let us briefly recall some basic concepts, terminology and notation on graded Lie groups that will be useful for the ideas we develop throughout this paper. For a more detailed exposition we refer to Folland and Stein [Chapter 1 in \cite{FS82}], or, to the more recent open access book, by Fischer and the second author [Chapter 3 in \cite{FR16}]. 
\par Let $\G$ be a nilpotent Lie group, and let $\mathfrak{g}$ be its Lie algebra. Its lower series is the descending sequence $\{\mathfrak{g}_i\}$ of ideals defined inductively by $\mathfrak{g}_1=\mathfrak{g}$, $\mathfrak{g}_i=[\mathfrak{g},\mathfrak{g}_{i-1}]$, for $i>1$. If $\mathfrak{g}$ admits a gradation of vector spaces as $\mathfrak{g}= \bigoplus_{i=1}^{\infty}  \mathfrak{g}_{i}$, where all, but finitely many $\mathfrak{g}_i$'s are equal to $\{0\}$, and is such that $[\mathfrak{g}_{i},\mathfrak{g}_{j}]\subset \mathfrak{g}_{i+j}$, for all $i,j$, then $\G$ is a \textit{graded Lie group}. Graded Lie groups are naturally \textit{homogeneous Lie groups}; that is $\mathfrak{g}$ is equipped with a one-parameter family $\{D_r\}_{r>0}$ of automorphisms of $\mathfrak{g}$ of the form $D_r=\exp(A\,\text{log} r)$, with $A$ being a diagonalisable linear operator on $\mathfrak{g}$ with positive eigenvalues. Such automorphisms shall be called \textit{dilations}. 
\par We have the following nested subclasses of Lie groups:
\[
\text{nilpotent}\supset \text{homogeneous} \supset \text{graded} \supset \text{stratified} \supset \{\text{Heisenberg, Engel, Cartan}\}\,.
\]
The cases of the Heisenberg, Engel and Cartan groups, are examples of graded Lie groups whose associated representation theory is well-understood in the sense that there exists a complete and explicit classification of the unitary, irreducible representations on them; see e.g. \cite{Tay84}, \cite{Dix57}, as well as the analysis in \cite{Cha20}, \cite{Cha21}. For graded Lie algebras $\mathfrak{g}$ 
of dimension $n$, 
the \textit{canonical family of dilations}, is the one dictated by the gradation of $\mathfrak{g}$, and is given by 
\begin{equation}\label{dw}
 X_{i}^{(j)}\circ D_r=r^{v_i}D_r \circ X_{i}^{(j)},
\end{equation}
where $X_{i}^{(j)} \in \mathfrak{g}_j$, $i=1,\ldots,n,$
and $v_i$'s are the same for all vectors $X_{i}^{(j)} \in \mathfrak{g}_j$.
These $v_i$'s are called the \textit{dilations' weights}.
\par In the case of graded Lie groups, or more generally in the case of nilpotent Lie groups, the exponential map (on the group) is a diffeomorphism from $\mathfrak{g}$ onto $\G$, under the group law that has been defined accordingly to the structure of $\mathfrak{g}$ due to the Baker–Campbell–Hausdorff formula; see, e.g. \cite{CG90}. More generally, this identification allows for the transmission of ideas from the infinitesimal level of the Lie algebra $\mathfrak{g}$ to the level of the group $\G$. Additionally, when $\mathfrak{g}$ is homogeneous, then, the dilations can be transported to the group side, while the Lebesgue measure $dx$ on $\mathfrak{g}$ is the bi-invariant \textit{Haar measure} on $\G$, and the number $Q$ that satisfies $d(D_r(x))=r^Q\,dx$, that is the sum of the eigenvalues of the matrix $A$, shall be called the \textit{homogeneous dimension} of $\G$. 
\par On the other hand, any element $\pi \in \widehat{\G}$ of the unitary dual of $\G$, with $\pi$ acting on some separable Hilbert space $\mathcal{H}_\pi$, gives rise to the representation $d\pi$ on the space of smooth vectors $\mathcal{H}_{\pi}^{\infty}$ on the infinitesimal level; that is we can define
\[
d\pi(X)v:=\lim_{t \rightarrow 0} \frac{1}{t}(\pi(exp(tX))v-v)\,,\quad X \in \mathfrak{g}\,,v \in \mathcal{H}_{\pi}^{\infty}\,.
\]
The above definition, due to the Poincar\'{e}-Birkhoff-Witt Theorem (see, e.g. \cite{Bir37}, see also a discussion in \cite{FR16}), that identifies that space of left-invariant operators in $\mathfrak{g}$ with the universal enveloping Lie algebra  $\mathfrak{U}(\mathfrak{g})$, can be extended to any $T \in \mathfrak{U}(\mathfrak{g})$, i.e., we can write $d\pi(T)$; or, with an abuse of notation, $\pi(T)$.
\par A remarkable class among left-invariant operators, that generalises the notion of the sub-Laplacian on the bigger class of graded groups, is that of \textit{Rockland} operators, which are usually denoted by $\R$. The latter is a class of operators that are hypoelliptic on $\G$ \cite{HN79}, and homogeneous of a certain positive degree. 
So, {\em by Rockland operators we understand the homogeneous left-invariant hypoelliptic differential operators on $\G$.}
For additional characterisations of the Rockland operators, we refer to \cite{Roc78}, \cite{Bea77}, \cite{TR97}, as well as to a presentation in \cite{FR16}.
\par We recall that $\R$ and $\pi(\R)$, are densely defined on their domains $\mathcal{D}(\G) \subset L^2(\G)$, and $\mathcal{H}_{\pi}^{\infty}\subset \mathcal{H}_\pi$, respectively (cf. [Proposition 4.1.15 in \cite{FR16}]. The latter implies that the positivity of $\R$, as required for our purposes, amounts to the condition
\[
(\R f,f)_{L^2(\G)}\geq 0\,,\quad f \in \mathcal{D}(\G)\,.
\]
 We remark that, for a positive Rockland operator $\R$, the spectrum of the operator $\pi(\R)$, with $\pi \in \widehat{\G}\setminus \{1\}$, is discrete \cite{HJL85}, which allows us to choose an orthonormal basis for $\mathcal{H}_\pi$ such that the self-adjoint operator $\pi(\R)$ can be identified with the infinite dimensional matrix with diagonal elements $\pi_{k,k}^{2}\equiv \pi_{k}^{2}$, with $\pi_k \in \mathbb{R}_{+}$. 
\par Let us now recall that the \textit{group Fourier transform} of a function $f \in L^1(\G)$ at $\pi \in \widehat{\G}$ is the bounded operator $\widehat{f}(\pi)$ (often denoted by $\pi(f)$) on $\mathcal{H}_\pi$ given by 
\[
(\widehat{f}(\pi)v_1,v_2)_{\mathcal{H}_\pi}:=\int_{\G} f(x)(\pi^{*}(x)v_1,v_2)_{\mathcal{H}_\pi}\,dx\,,\quad v_1,v_2 \in \mathcal{H}_\pi\,.
\]
\par If $f \in L^2(\G) \cap L^1(\G)$, then $\widehat{f}(\pi)$ is a Hilbert-Schmidt operator, and we have the following isometry, known as the \textit{Plancherel formula}
	\begin{equation}\label{planc.id}
		\int_{\G} |f(x)|^2\,dx=\int_{\widehat{{\G}}}\|\pi(f)\|^{2}_{\textnormal{HS}}\,d\mu(\pi)\,,
		\end{equation}
		where $\mu$ stands for the \textit{Plancherel measure} on $\G$. For a detailed exposition of the Plancherel Theorem and the relevant theory, we refer to \cite{Dix77, Kir07, CG90}, or to [Section 1.8, Appendix B.2 in \cite{FR16}].
\par Finally, since the action of a Rockland operator $\R$ is involved in our analysis, let us make a brief overview of some related properties.
\begin{definition}[Homogeneous Sobolev spaces]
For $s>0$, $p>1$, and $\R$ a positive homogeneous Rockland operator of degree $\nu$, we define the $\R$-Sobolev spaces as the space of tempered distributions $\mathcal{S}^{'}(\G)$ obtained by the completion of $\mathcal{S}(\G)\cap \textnormal{Dom}(\R^{\frac{s}{\nu}})$ for the norm
\[
\|f\|_{\dot{L}^{p}_{s}(\G)}:=\|\R^{\frac{s}{\nu}}_{p}f\|_{L^p(\G)}\,,\quad f \in \mathcal{S}(\G)\cap \textnormal{Dom}(\R^{\frac{s}{\nu}}_{p})\,,
\]
where $\R_p$ is the maximal restriction of $\R$ to $L^{p}(\G)$.\footnote{When $p=2$, we will write $\R_2=\R$ for the self-adjoint extension of $\R$ on $L^2(\G)$.}
\end{definition}
Let us mention that, the above $\R$-Sobolev spaces do not depend on the specific choice of $\R$, in the sense that, different choices of the latter produce equivalent norms, see [Proposition 4.4.20 in \cite{FR16}]. 
\par In the scale of these Sobolev spaces, we recall the next proposition as in [Proposition 4.4.13 in \cite{FR16}].
\begin{proposition}[Sobolev embeddings]
For $1<\tilde{q}_0<q_0<\infty$ and for $a,b \in \mathbb{R}$ such that 
\[
b-a=Q \left( \frac{1}{\tilde{q}_0}-\frac{1}{q_0}\right)\,,
\]
we have the continuous inclusions 
\[
\dot{L}^{\tilde{q}_0}_{b}(\G) \subset \dot{L}^{q_0}_{a}(\G)\,,
\]
that is, for every 
$f \in \dot{L}^{\tilde{q}_0}_{b}(\G)$, we have $f \in \dot{L}^{q_0}_{a}(\G)$, and there exists some positive constant $C=C(\tilde{q}_0,q_0,a,b)$ (independent of $f$) such that 
\begin{equation}
\label{inclusions}
\|f\|_{\dot{L}^{q_0}_{a}(\G)}\leq C \|f\|_{\dot{L}^{\tilde{q}_0}_{b}(\G)}\,.
\end{equation}
\end{proposition}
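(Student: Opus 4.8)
The plan is to reduce the embedding to the fractional integration (Hardy--Littlewood--Sobolev) inequality on the homogeneous group $\G$. Put $\sigma:=\frac{b-a}{\nu}$. Since $1<\tilde q_0<q_0<\infty$ we have $\tfrac1{\tilde q_0}-\tfrac1{q_0}\in(0,1)$, hence $0<b-a<Q$ and $0<\sigma<\tfrac{Q}{\nu}$. For $f$ in the dense subspace $\mathcal{S}(\G)\cap\mathrm{Dom}(\R^{b/\nu})$ I would set $g:=\R^{b/\nu}f$; using the functional calculus of the positive self-adjoint operator $\R$ together with the consistency of its fractional powers across the $L^p$-scales one has $\R^{a/\nu}f=\R^{(a-b)/\nu}g=\R^{-\sigma}g$ and $\|g\|_{L^{\tilde q_0}(\G)}=\|f\|_{\dot{L}^{\tilde q_0}_{b}(\G)}$, so it suffices to establish the Riesz-potential bound
\[
\|\R^{-\sigma}g\|_{L^{q_0}(\G)}\leq C\,\|g\|_{L^{\tilde q_0}(\G)},\qquad \tfrac1{q_0}=\tfrac1{\tilde q_0}-\tfrac{\sigma\nu}{Q},
\]
and then extend by continuity, using that $\mathcal{S}(\G)\cap\mathrm{Dom}$ is dense in these homogeneous Sobolev spaces (which are defined precisely as such completions).

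Next I would identify $\R^{-\sigma}$ as a convolution operator with a homogeneous kernel. The heat semigroup $e^{-t\R}$, $t>0$, acts by convolution with a function $h_t\in\mathcal{S}(\G)$ which, by the positivity and $\nu$-homogeneity of $\R$, satisfies the scaling $h_t=t^{-Q/\nu}h_1\circ D_{t^{-1/\nu}}$. By the subordination formula,
\[
\R^{-\sigma}g=g*\mathcal{B}_\sigma,\qquad \mathcal{B}_\sigma:=\frac{1}{\Gamma(\sigma)}\int_0^\infty t^{\sigma-1}h_t\,dt,
\]
and a change of variables in $t$ (substituting $t=|x|^\nu s$) shows that $\mathcal{B}_\sigma$ is smooth on $\G\setminus\{0\}$ and homogeneous of degree $\sigma\nu-Q\in(-Q,0)$, with no logarithmic correction since $\sigma\nu\neq Q$; the $s$-integral converges exactly because $\sigma<\tfrac{Q}{\nu}$. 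In particular $\mathcal{B}_\sigma$ is locally integrable near the origin and decays like $|x|^{\sigma\nu-Q}$ at infinity, so $g\mapsto g*\mathcal{B}_\sigma$ is well defined on $\mathcal{S}(\G)$ and on the relevant $L^p(\G)$.

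The last step is the fractional integration theorem on homogeneous groups. Fixing a homogeneous quasi-norm $|\cdot|$, the bound on the unit sphere gives $|\mathcal{B}_\sigma(x)|\lesssim|x|^{\sigma\nu-Q}$, hence $\mathcal{B}_\sigma\in L^{r,\infty}(\G)$ with $r=\tfrac{Q}{Q-\sigma\nu}$. The desired inequality $\|\R^{-\sigma}g\|_{L^{q_0}}\lesssim\|g\|_{L^{\tilde q_0}}$ is then the weak-type Young inequality $L^{\tilde q_0}(\G)*L^{r,\infty}(\G)\to L^{q_0}(\G)$ with $1+\tfrac1{q_0}=\tfrac1{\tilde q_0}+\tfrac1r$, which — exactly as in the Euclidean Hardy--Littlewood--Sobolev argument — follows by decomposing $\mathcal{B}_\sigma$ at a $\lambda$-dependent level and applying the Marcinkiewicz interpolation theorem; the only group-theoretic inputs are the Ahlfors $Q$-regularity (hence doubling) of the Haar measure and Young's convolution inequality on $\G$, cf. \cite{FR16, FS82}. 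Combining the three steps and unwinding $g=\R^{b/\nu}f$ yields \eqref{inclusions}.

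The step I expect to be the main obstacle is the second one: showing cleanly that $\R^{-\sigma}$, for $\sigma$ in the relevant range, is given by convolution with a kernel having \emph{exactly} the homogeneity degree $\sigma\nu-Q$, the correct local integrability at $0$ and decay at $\infty$, together with the composition identity $\R^{a/\nu}\R^{-b/\nu}=\R^{-\sigma}$ on a subspace dense in the homogeneous Sobolev spaces. Once the Riesz kernel and its homogeneity are in hand, the boundedness $L^{\tilde q_0}(\G)\to L^{q_0}(\G)$ is essentially the classical Hardy--Littlewood--Sobolev proof transplanted to the homogeneous-group setting.
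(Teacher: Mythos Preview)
The paper does not give its own proof of this proposition; it merely recalls it as Proposition 4.4.13 of \cite{FR16}. Your sketch is a correct outline of the standard argument and is essentially the route taken in \cite{FR16}: reduce to the Riesz-potential bound $\|\R^{-\sigma}g\|_{L^{q_0}}\lesssim\|g\|_{L^{\tilde q_0}}$, identify $\R^{-\sigma}$ (for $0<\sigma<Q/\nu$) as right convolution with a kernel homogeneous of degree $\sigma\nu-Q$ via the heat semigroup, and then invoke the Hardy--Littlewood--Sobolev/weak Young inequality on homogeneous groups from \cite{FS82}. The point you flag as the main obstacle---the kernel description of $\R^{-\sigma}$ and the $L^p$-consistency of the fractional powers---is exactly where the work lies in \cite{FR16} (see their Sections 4.3--4.4), but your use of the subordination formula and the scaling $h_t=t^{-Q/\nu}h_1\circ D_{t^{-1/\nu}}$ is the right mechanism to obtain the homogeneity and local integrability of $\mathcal{B}_\sigma$.
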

\par In the sequel we will make use of the following notation:
\begin{notation}
\begin{itemize}
    \item When we write $a \lesssim b$, we will mean that there exists some constant $c>0$ (independent of any involved parameter) such that $a \leq c b$;
    \item if $\alpha=(\alpha_1,\cdots,\alpha_n) \in \mathbb{N}^n$ is some multi-index, then we denote by 
    \[
    [\alpha]=\sum_{i=1}^{n} v_i\alpha_i\,,
    \]
    its \textit{homogeneous length}, where the $v_i$'s stand for the dilations' weights as in \eqref{dw}, and by 
    \[
    |\alpha|=\sum_{i=1}^{n}\alpha_i\,,
    \]
    the length of it;
    \item for suitable $f \in \mathcal{S}^{'}(\G)$  we have introduced the following norm
    \[
    \|f\|_{H^{s}(\G)}:=\|f\|_{\dot{L}^{2}_{s}(\G)}+\|f\|_{L^2(\G)}\,;
    \]
    \item when regulisations of functions/distributions on $\G$ are considered, they must be regarded as arising  via  convolution  with  Friedrichs-mollifiers;  that is, $\psi$ is a Friedrichs-mollifier, if it is a compactly supported smooth function with $\int_{\G}\psi\,dx=1$. Then the regularising net is defined as
\begin{equation}\label{mol}
	\psi_{\epsilon}(x)=\epsilon^{-Q}\psi(D_{\epsilon^{-1}}(x))\,,\quad \epsilon \in (0,1]\,,
\end{equation}
where $Q$ is the homogeneous dimension of $\G$.
\end{itemize}
\end{notation}

\section{Estimates for the classical solution}

Here and thereafter, we consider a fixed power $s>0$ of a fixed, positive (in the operator sense) Rockland operator $\R$ that is assumed to be of homogeneous degree $\nu$. Moreover, the coefficient $m$ in \eqref{kg.eq} will be regarded to be non-negative on $\G$.
\par The next two propositions prove the existence and uniqueness of the classical solution to the Cauchy problem \eqref{kg.eq}, in the cases where the coefficient $m$ is such that $m \in L^{\infty}(\G)$ or $m \in L^{\frac{2Q}{\nu s}}(\G)$, where, in the second case, we must additionally require $Q> \nu s$.
\begin{proposition}\label{prop1.clas.kg}
Let $m \in L^{\infty}(\G)$, $m \geq 0$, and suppose that $(u_0,u_1) \in H^{\frac{s \nu}{2}}(\G)\times L^2(\G)$. Then, there exists a unique solution $u \in C([0,T];H^{\frac{s \nu}{2}}(\G))\cap C^{1}([0,T];L^{2}(\G))$ to the Cauchy problem \eqref{kg.eq}, that satisfies the estimate 
	\begin{equation}\label{prop1.claim}
		\|u(t,\cdot)\|_{H^{\frac{s \nu}{2}}(\G)}+\|\partial_{t}u(t,\cdot)\|_{L^2(\G)} \lesssim (1+\|m\|_{L^{\infty}(\G)})\cdot \{\|u_1\|_{L^2(\G)}+\|u_0\|_{H^{\frac{s \nu}{2}}(\G)}\}\,,
		\end{equation}
		uniformly in $t \in [0,T]$.
\end{proposition}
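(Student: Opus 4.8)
The plan is to rewrite \eqref{kg.eq} as an abstract second-order Cauchy problem governed by a non-negative self-adjoint operator, obtain existence and uniqueness from its functional calculus, and read off \eqref{prop1.claim} from an energy functional that is conserved up to a Gr\"onwall correction. Concretely, put $A:=\R^{s}+m(\cdot)$ on $L^2(\G)$. Since $\R$ is a positive Rockland operator, $\R^{s}$ is self-adjoint and non-negative; since $m\in L^\infty(\G)$ with $m\ge 0$, the multiplication operator $m(\cdot)$ is bounded, self-adjoint and non-negative; hence $A$ is self-adjoint and non-negative, and --- the boundedness of $m$ leaving the form domain unchanged --- its form domain $\textnormal{Dom}(A^{1/2})$ equals $\textnormal{Dom}(\R^{s/2})$, which by the Sobolev space theory on graded Lie groups \cite{FR16} coincides with $H^{\frac{s\nu}{2}}(\G)$ up to equivalent norms (independence of the choice of $\R$ being [Proposition 4.4.20 in \cite{FR16}]). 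Then \eqref{kg.eq} reads $u_{tt}+Au=0$, $u(0)=u_0$, $u_t(0)=u_1$, and for $(u_0,u_1)\in H^{\frac{s\nu}{2}}(\G)\times L^2(\G)$ the spectral theorem produces the unique solution $u(t)=\cos(t\sqrt A)u_0+(\sqrt A)^{-1}\sin(t\sqrt A)u_1\in C([0,T];H^{\frac{s\nu}{2}}(\G))\cap C^1([0,T];L^2(\G))$; equivalently, the first-order system for $(u,u_t)$ is governed by a strongly continuous one-parameter group on $H^{\frac{s\nu}{2}}(\G)\times L^2(\G)$. (Alternatively, $u$ can be built by Duhamel's formula against the mass-free Klein-Gordon propagator of $\R^{s}$ together with a short-time contraction argument, then iterated to $[0,T]$; or, since the group Fourier transform diagonalises $\R^{s}$, one may argue through Plancherel's identity \eqref{planc.id}.)

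For the estimate, I would introduce the energy
\[
E(t):=\|\partial_t u(t,\cdot)\|_{L^2(\G)}^2+\|\R^{s/2}u(t,\cdot)\|_{L^2(\G)}^2+\|u(t,\cdot)\|_{L^2(\G)}^2+\int_{\G}m(x)\,|u(t,x)|^2\,dx .
\]
Differentiating in $t$, substituting $\partial_{tt}u=-\R^{s}u-m u$, using the self-adjointness of $\R^{s/2}$ so that $(\R^{s/2}\partial_t u,\R^{s/2}u)_{L^2(\G)}=(\partial_t u,\R^{s}u)_{L^2(\G)}$, and using that $m$ is real-valued, all the $\R^{s}$- and $m$-contributions cancel, leaving
\[
E'(t)=2\,\textnormal{Re}\,(\partial_t u(t,\cdot),u(t,\cdot))_{L^2(\G)}\le \|\partial_t u(t,\cdot)\|_{L^2(\G)}^2+\|u(t,\cdot)\|_{L^2(\G)}^2\le E(t),
\]
where the final bound uses $m\ge 0$ to discard the non-negative potential term, so Gr\"onwall's inequality gives $E(t)\le e^{T}E(0)$ on $[0,T]$. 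Since $\|\R^{s/2}f\|_{L^2(\G)}=\|f\|_{\dot L^{2}_{\frac{s\nu}{2}}(\G)}$, $\|f\|_{\dot L^{2}_{\frac{s\nu}{2}}(\G)}^{2}+\|f\|_{L^2(\G)}^{2}\le \|f\|_{H^{\frac{s\nu}{2}}(\G)}^{2}$, and $\int_{\G}m\,|u_0|^2\le\|m\|_{L^\infty(\G)}\|u_0\|_{L^2(\G)}^2$, we get
\[
E(0)\le \|u_1\|_{L^2(\G)}^2+(1+\|m\|_{L^\infty(\G)})\,\|u_0\|_{H^{\frac{s\nu}{2}}(\G)}^2\le (1+\|m\|_{L^\infty(\G)})\big(\|u_1\|_{L^2(\G)}+\|u_0\|_{H^{\frac{s\nu}{2}}(\G)}\big)^2 ,
\]
while $\|u(t,\cdot)\|_{H^{\frac{s\nu}{2}}(\G)}+\|\partial_t u(t,\cdot)\|_{L^2(\G)}\lesssim E(t)^{1/2}$; combining these with $(1+x)^{1/2}\le 1+x$ yields \eqref{prop1.claim} uniformly in $t\in[0,T]$, and the finiteness of $E(t)$ confirms $u(t,\cdot)\in L^2(\G)\cap\dot L^{2}_{\frac{s\nu}{2}}(\G)=H^{\frac{s\nu}{2}}(\G)$. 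Uniqueness follows by applying the same identity to the difference of two solutions of the (linear) equation, which forces $E\equiv 0$.

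The main obstacle is the rigor of the energy computation: for data merely in $H^{\frac{s\nu}{2}}(\G)\times L^2(\G)$ the solution need not satisfy $u(t,\cdot)\in\textnormal{Dom}(\R^{s})$, so differentiating $E(t)$ and the integration by parts hidden in the self-adjointness step are only formal. The cleanest remedy is to interpret $\|\R^{s/2}u(t,\cdot)\|_{L^2(\G)}^2+\int_{\G}m\,|u(t,\cdot)|^2$ as $\|A^{1/2}u(t,\cdot)\|_{L^2(\G)}^2$, the quadratic form of $A$, which is exactly conserved by the one-parameter group above, and then to treat the remaining $\|u(t,\cdot)\|_{L^2(\G)}^2$ by Gr\"onwall; alternatively, establish the identity first for smooth compactly supported data (or after replacing $m$ by the mollification $m\ast\psi_\epsilon$, cf. \eqref{mol}) with constants independent of the approximation, and pass to the limit by density. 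The only genuinely quantitative subtlety is obtaining the \emph{linear} rather than exponential dependence on $\|m\|_{L^\infty(\G)}$ in \eqref{prop1.claim}: this is exactly why the potential term $\int_{\G}m\,|u|^2$ is incorporated into $E(t)$, as it is what makes the $m$-contributions cancel in $E'(t)$ instead of appearing inside the Gr\"onwall exponent.
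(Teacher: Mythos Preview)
Your argument is correct, but it follows a genuinely different route from the paper's. The paper also starts from an energy identity, but uses the \emph{smaller} functional $E(t)=\|u_t\|_{L^2}^2+\|\R^{s/2}u\|_{L^2}^2+\|\sqrt{m}\,u\|_{L^2}^2$ (no $\|u\|_{L^2}^2$ term), which is exactly conserved; this immediately controls $\|u_t\|_{L^2}$ and $\|\R^{s/2}u\|_{L^2}$. The missing $\|u(t,\cdot)\|_{L^2}$ estimate is then obtained by taking the group Fourier transform, writing the resulting matrix equation componentwise as $\widehat{u}_{tt}(t,\pi)_{k,l}+\pi_k^{2s}\widehat{u}(t,\pi)_{k,l}=-\widehat{(mu)}(t,\pi)_{k,l}$, solving each scalar ODE explicitly by Duhamel, estimating the kernel $\sin(t\beta^s)/\beta^s$ pointwise, and reassembling via the Plancherel formula \eqref{planc.id}; the source term $\|mu\|_{L^2}^2\le \|m\|_{L^\infty}\|\sqrt{m}\,u\|_{L^2}^2$ is handled with the already-obtained energy bound. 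By contrast, you avoid the Fourier analysis entirely by building $\|u\|_{L^2}^2$ into the energy and paying a Gr\"onwall factor $e^T$; this is shorter, yields the slightly sharper prefactor $(1+\|m\|_{L^\infty})^{1/2}$, and your framing of existence via the spectral calculus of $A=\R^s+m$ is cleaner than the paper's (which does not spell out existence separately). The paper's approach, on the other hand, keeps the conserved energy exact and isolates the $L^2$-estimate as a separate Fourier-side computation, which is the template reused later in Proposition~\ref{prop2} and in the consistency theorems.
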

\begin{proof}
Multiplying the equation \eqref{kg.eq} by $u_t$ and integrating over $\mathbb{G}$, we get
	\begin{equation}\label{Re0,kg}
		\Re(\langle u_{tt}(t,\cdot),u_t(t,\cdot)\rangle_{L^2(\mathbb{G})}+\langle \mathcal{R}^{s}u(t,\cdot),u_t(t,\cdot)\rangle_{L^2(\G)}+\langle m(\cdot)u(t,\cdot),u_t(t,\cdot) \rangle_{L^2(\G)})=0\,,
	\end{equation}
for all $t \in [0,T]$. It is easy to check that 
\[
\Re(\langle u_{tt}(t,\cdot),u_t(t,\cdot)\rangle_{L^2(\mathbb{G})})=\frac{1}{2}\partial_{t} \langle u_t(t,\cdot),u_t(t,\cdot)\rangle_{L^2(\G)}\,,
\]
\[
\Re(\langle \mathcal{R}^{s}u(t,\cdot),u_t(t,\cdot)\rangle_{L^2(\G)})=\frac{1}{2} \partial_{t}\langle \R^{\frac{s}{2}}u(t,\cdot), \R^{\frac{s}{2}}u(t,\cdot) \rangle_{L^2(\G)}\,,
\]
and
\[
\Re(\langle m(\cdot)u(t,\cdot),u_t(t,\cdot) \rangle_{L^2(\G)})=\frac{1}{2} \partial_{t} \langle \sqrt{m}(\cdot)u(t,\cdot),\sqrt{m}(\cdot),u(t,\cdot) \rangle_{L^2(\G)}\,.
\]
Denoting by $$E(t):=\|u_t(t,\cdot)\|\L+\|\R^{\frac{s}{2}}u(t,\cdot)\|\L+\|\sqrt{m}(\cdot)u(t,\cdot)\|\L,$$ the energy functional estimate of the system \eqref{kg.eq}, the equation \eqref{Re0,kg} implies that $\partial_{t}E(t)=0$, and consequently also that $E(t)=E(0)$, for all $t \in [0,T]$. By taking into consideration the estimate 
\begin{equation}\label{plog.mu0}
\|\sqrt{m}(\cdot)u_0\|\L \leq \|m\|_{L^\infty(\G)}\|u_0\|\L\,,
\end{equation}
by the above, it follows that each positive term that $E(t)$ consists of, is bounded itself. That is, we have that
\begin{eqnarray}
	\label{prop.1.est.m}
	\|\sqrt{m}(\cdot)u(t,\cdot)\|\L \lesssim \|u_1\|\L+\|\R^{\frac{s}{2}}u_0\|\L+\|m\|_{L^\infty(\G)}\|u_0\|\L,
\end{eqnarray}
while also that
\begin{eqnarray}\label{forRs2}
	\|u_t(t,\cdot)\|\L\,,\|\R^{\frac{s}{2}}u(t,\cdot)\|\L & \lesssim & \|u_1\|\L+\|u_0\|^{2}_{H^{\frac{s \nu}{2}}(\G)}+\|m\|_{L^\infty(\G)}\|u_0\|^{2}_{L^2(\G)}\nonumber\\
	& \lesssim & (1+\|m\|_{L^\infty(\G)})\{\|u_1\|\L+\|u_0\|^{2}_{H^{\frac{s \nu}{2}}(\G)}\},
\end{eqnarray}
uniformly in $t \in [0,T]$, where we use 
\[
\|\R^{\frac{s}{2}}u_0\|_{L^2(\G)}\,,\|u_0\|_{L^2(\G)}\leq \|u_0\|_{H^{\frac{s \nu}{2}}(\G)}\,.
\]
Observe that, to prove \eqref{prop1.claim}, it remains to show the desired estimate for the norm $\|u(t,\cdot)\|_{L^2(\G)}$. To this end, we first apply the group Fourier transform to \eqref{kg.eq} with respect to $x \in \G$ and for all $\pi \in \widehat{\G}$, and we get 
\begin{equation}
	\label{prop1.ft}
	\begin{cases}
	\widehat{u}_{tt}(t,\pi)+\pi(\R)^s\, \widehat{u}(t,\pi)=\widehat{f}(t,\pi)\,,\\
	\widehat{u}(0,\pi)=\widehat{u}_0(\pi)\,,\widehat{u}_t(0,\pi)=\widehat{u}_1(\pi)\,,
	\end{cases}
\end{equation}
where $\widehat{f}(t,\pi)$ denotes the group Fourier transform of the function $f(t,x):=-m(x)u(t,x)$. Taking into account the matrix representation of $\pi(\R)$, we rewrite the matrix equation \eqref{prop1.ft} componentwise as the infinite system of equations of the form 
\begin{equation}
	\label{prop1.ft2}
	\widehat{u}_{tt}(t,\pi)_{k,l}+\pi^{2s}_{k}\cdot \widehat{u}(t,\pi)_{k,l}=\widehat{f}(t,\pi)_{k,l}\,,
\end{equation}
with initial conditions $\widehat{u}(0,\pi)_{k,l}=\widehat{u}_0(\pi)_{k,l}$ and $\widehat{u}_t(0,\pi)_{k,l}=\widehat{u}_1(\pi)_{k,l}$, for all $\pi \in \widehat{\G}$ and for any $k,l \in \mathbb{N}$, where now $\widehat{f}(t,\pi)_{k,l}$ can be regarded as the source term of the second order differential equation as in \eqref{prop1.ft2}.
\par  Now, let us decouple the matrix equation in \eqref{prop1.ft2} by fixing $\pi \in \widehat{\G}$, and treat each of the equations represented in \eqref{prop1.ft2} individually. If we denote by 
\[
v(t):=\widehat{u}(t,\pi)_{k,l}\,,\beta^{2s}:=\pi^{2s}_{k}\,,f(t):=\widehat{f}(t,\pi)_{k,l}\,,
\]  
and 
\[
v_0:=\widehat{u}_0(\pi)_{k,l}\,,v_1:=\widehat{u}_1(\pi)_{k,l}\,,
\]
then \eqref{prop1.ft2} becomes
\begin{equation}\label{prop1.fix.pi}
	\begin{cases}
	v^{''}(t)+\beta^{2s}\cdot v(t)=f(t)\,,\\
	v(0)=v_0\,,v^{'}(0)=v_1\,,
	\end{cases}
\end{equation}
with $\beta>0.$
By solving first the homogeneous version of \eqref{prop1.fix.pi}, and then by applying Duhamel's principle (see e.g. \cite{Eva98}), we get the following representation of the solution of \eqref{prop1.fix.pi}
\begin{eqnarray}
	\label{prop1.solut.fix.pi}
	v(t)&=&\cos(t \beta^s)v_0+\frac{\sin(t \beta^s)}{\beta^s}v_1+\int_{0}^{t}\frac{\sin((t-s)\beta^s)}{\beta^s}f(s)\,ds. \nonumber\\
\end{eqnarray}
Assuming without loss of generality that $T \geq 1$, and using the estimates
\[
|\cos(t\beta^s)|\leq 1\,,\quad \forall t \in [0,T]\,,
\]
and 
\[
|\sin(t\beta^s)|\leq 1\,,
\]
for large values of the quantities $t\beta^s$, while for small values of them, the estimates
\[
|\sin(t\beta^s)|\leq t \beta^s\leq T\beta^s\,,
\] 
inequality \eqref{prop1.solut.fix.pi} yields
\begin{eqnarray*}
|v(t)| &\leq & |v_0|+T|v_1|+\|t-s\|_{L^{2}[0,T]}\|f(t)\|_{L^2[0,T]} \lesssim |v_0|+|v_1| +\|f(t)\|_{L^2[0,T]}\,,
\end{eqnarray*}
where we have applied the Cauchy-Schwarz inequality.
Now the last estimate, if substituting back our initial functions in $t$, gives
\[
|\widehat{u}(t,\pi)_{k,l}|^2\lesssim |\widehat{u}_0(\pi)_{k,l}|^2+|\widehat{u}_1(\pi)_{k,l}|^2+\|\widehat{f}(t,\pi)_{k,l}\|_{L^2[0,T]}^2\,,
\]
where the latter holds uniformly in $\pi \in \widehat{\G}$ and for each $k,l \in \mathbb{N}$. Recall that for any Hilbert-Schmidt operator $A$, one has 
\[
\|A\|^{2}_{\textnormal{HS}}=\sum_{k,l}|\langle A \varphi_{k},\varphi_{l}\rangle|^2\,,
\]
for any orthonormal basis $\{\varphi_1,\varphi_2,\cdots\}$, summing the above over $k,l$ we get 
\[
\|\widehat{u}(t,\pi)_{k,l}\|^{2}_{\textnormal{HS}}\lesssim \|\widehat{u}_0(\pi)_{k,l}\|^{2}_{\textnormal{HS}}+\|\widehat{u}_1(\pi)_{k,l}\|^{2}_{\textnormal{HS}}+\sum_{k,l}\int_{0}^{T}|\widehat{f}(t,\pi)_{k,l}|^2\,dt\,.
\]
Next we integrate the last inequality with respect to the Plancherel measure $\mu$ on $\widehat{\G}$, so that using the Plancherel identity \eqref{planc.id}, we obtain 
\begin{equation}\label{before.Pl}
	\|u(t,\cdot)\|\L \lesssim  \|u_0\|\L+\|u_1\|\L+\int_{\G} \sum_{k,l}\int_{0}^{T}|\widehat{f}(t,\pi)_{k,l}|^2\,dt\,d\mu(\pi)\,,
\end{equation}
and if we use Lebesgue's dominated convergence theorem, Fubini's theorem and the Plancherel formula we have
\begin{equation}\label{F,dc,Pl}
\int_{\G} \sum_{k,l}\int_{0}^{T}|\widehat{f}(t,\pi)_{k,l}|^2\,dt\,d\mu=\int_{0}^{T}\int_{\G}\sum_{k,l}|\widehat{f}(t,\pi)_{k,l}|^2\,d\mu\,dt=\int_{0}^{T}\|f(t,\cdot)\|_{L^2(\G)}^{2}\,dt\,.
\end{equation}
Now, by \eqref{prop.1.est.m}, and the formula of $f$ we have
\begin{eqnarray}\label{est.for.f.mu}
\|f(t,\cdot)\|_{L^2(\G)}^{2} & = & \|m(\cdot)u(t,\cdot)\|\L\nonumber\\
& \leq & \|m\|_{L^{\infty}(\G)}\|\sqrt{m}(\cdot)u(t,\cdot)\|\L\nonumber\\
& \lesssim & (1+\|m\|_{L^\infty(\G)})^2\{\|u_1\|\L+\|u_0\|^{2}_{H^{\frac{s \nu}{2}}(\G)}\}\,.
\end{eqnarray}
Combining the inequalities \eqref{before.Pl}, \eqref{F,dc,Pl} and \eqref{est.for.f.mu} we get 
\begin{eqnarray}\label{prop1.est.B}
	\|u(t,\cdot)\|\L & \lesssim &  (1+\|m\|_{L^\infty(\G)})^2\{\|u_1\|\L+\|u_0\|^{2}_{H^{\frac{s \nu}{2}}(\G)}\}\,,
	\end{eqnarray}
uniformly in $t \in [0,T]$. The claim \eqref{prop1.claim} now follows by \eqref{forRs2} and \eqref{prop1.est.B}. Finally, the uniqueness of $u$ is an immediate consequence of \eqref{prop1.claim}, and the proof is complete. 
\end{proof}
\begin{proposition}\label{prop2}
Assume that $Q >\nu s$, and let $m \in L^{\frac{2Q}{\nu s}}(\G)\cap L^{\frac{Q}{\nu s}}(\G)$, $m \geq 0$. If we suppose that $(u_0,u_1) \in H^{\frac{s \nu}{2}}(\G)\times L^2(\G)$, then there exists a unique solution $u \in C([0,T];H^{\frac{s \nu}{2}}(\G))\cap C^{1}([0,T];L^{2}(\G))$ to the Cauchy problem \eqref{kg.eq} satisfying the estimate 
\begin{eqnarray}
\label{prop2.claim}
\lefteqn{\|u(t,\cdot)\|_{H^{\frac{s \nu}{2}}(\G)}+\|\partial_{t}u(t,\cdot)\|_{L^2(\G)}}\nonumber\\
& \lesssim & \left(1+\|m\|_{L^{\frac{2Q}{\nu s}}(\G)}\right) \left(1+\|m\|_{L^{\frac{Q}{\nu s}}(\G)}\right)^{\frac{1}{2}} \left\{\|u_1\|_{L^2(\G)}+\|u_0\|_{H^{\frac{s \nu}{2}}(\G)}\right\}  \,,
\end{eqnarray}
 uniformly in $t \in [0,T]$.
\end{proposition}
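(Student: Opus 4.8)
The plan is to mimic the proof of Proposition \ref{prop1.clas.kg}, replacing every use of the crude bound $\|\sqrt{m}(\cdot)u\|_{L^2(\G)} \le \|m\|_{L^\infty(\G)}\|u\|_{L^2(\G)}$ by a Hölder--Sobolev argument that only requires $m$ in a Lebesgue space. The starting point is again the energy identity: multiplying \eqref{kg.eq} by $u_t$, integrating over $\G$, and taking real parts, we get $\partial_t E(t)=0$ with $E(t)=\|u_t(t,\cdot)\|_{L^2(\G)}^2+\|\R^{s/2}u(t,\cdot)\|_{L^2(\G)}^2+\|\sqrt m(\cdot)u(t,\cdot)\|_{L^2(\G)}^2$, hence $E(t)=E(0)$ for all $t\in[0,T]$. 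The only new ingredient needed to control $E(0)$ is an estimate for $\|\sqrt m(\cdot)u_0\|_{L^2(\G)}^2=\int_\G m|u_0|^2$. By Hölder's inequality with exponents $\tfrac{Q}{\nu s}$ and $\tfrac{Q}{Q-\nu s}$,
\[
\int_\G m\,|u_0|^2\,dx \le \|m\|_{L^{\frac{Q}{\nu s}}(\G)}\,\|u_0\|_{L^{\frac{2Q}{Q-\nu s}}(\G)}^2,
\]
and then the Sobolev embedding \eqref{inclusions} with $\tilde q_0=2$, $q_0=\tfrac{2Q}{Q-\nu s}$, $b=\tfrac{s\nu}{2}$, $a=0$ (the dimensional balance $b-a=Q(\tfrac12-\tfrac{Q-\nu s}{2Q})=\tfrac{\nu s}{2}$ is exactly satisfied, which is where the hypothesis $Q>\nu s$ is used so that $q_0<\infty$) gives $\|u_0\|_{L^{2Q/(Q-\nu s)}(\G)}\lesssim \|\R^{s/2}u_0\|_{L^2(\G)}\le \|u_0\|_{H^{s\nu/2}(\G)}$. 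Consequently $\|\sqrt m(\cdot)u_0\|_{L^2(\G)}^2 \lesssim \|m\|_{L^{Q/\nu s}(\G)}\|u_0\|_{H^{s\nu/2}(\G)}^2$, and conservation of $E$ yields
\[
\|u_t(t,\cdot)\|_{L^2(\G)},\ \|\R^{s/2}u(t,\cdot)\|_{L^2(\G)},\ \|\sqrt m(\cdot)u(t,\cdot)\|_{L^2(\G)} \lesssim \bigl(1+\|m\|_{L^{Q/\nu s}(\G)}\bigr)^{1/2}\bigl\{\|u_1\|_{L^2(\G)}+\|u_0\|_{H^{s\nu/2}(\G)}\bigr\},
\]
uniformly in $t\in[0,T]$.

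Next I would recover the missing $\|u(t,\cdot)\|_{L^2(\G)}$ bound exactly as in Proposition \ref{prop1.clas.kg}: take the group Fourier transform in $x$, decouple \eqref{prop1.ft2} into scalar ODEs $v''+\beta^{2s}v=f$, use Duhamel's formula \eqref{prop1.solut.fix.pi}, estimate $|\cos|,|\sin|\le 1$ and $|\sin(t\beta^s)|\le T\beta^s$ in the two regimes, apply Cauchy--Schwarz to the Duhamel integral, square, sum over $k,l$, integrate in $\pi$ against the Plancherel measure, and use Fubini to arrive at
\[
\|u(t,\cdot)\|_{L^2(\G)}^2 \lesssim \|u_0\|_{L^2(\G)}^2+\|u_1\|_{L^2(\G)}^2+\int_0^T \|f(\tau,\cdot)\|_{L^2(\G)}^2\,d\tau, \qquad f(\tau,x):=-m(x)u(\tau,x).
\]
The new point is bounding $\|f(\tau,\cdot)\|_{L^2(\G)}^2=\int_\G m^2|u(\tau,\cdot)|^2$. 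Here I would use Hölder with exponents $\tfrac{Q}{\nu s}$ and $\tfrac{Q}{Q-\nu s}$ applied to $m\cdot (m|u|^2)$, or more directly Hölder on $m^2|u|^2 = m^2 \cdot |u|^2$: writing $\int_\G m^2|u|^2 \le \|m^2\|_{L^{Q/\nu s}(\G)}\|u\|_{L^{2Q/(Q-\nu s)}(\G)}^2 = \|m\|_{L^{2Q/\nu s}(\G)}^2\,\|u(\tau,\cdot)\|_{L^{2Q/(Q-\nu s)}(\G)}^2$, and then the same Sobolev embedding as above bounds $\|u(\tau,\cdot)\|_{L^{2Q/(Q-\nu s)}(\G)}\lesssim \|\R^{s/2}u(\tau,\cdot)\|_{L^2(\G)}$, which we have just controlled. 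This produces
\[
\|f(\tau,\cdot)\|_{L^2(\G)}^2 \lesssim \|m\|_{L^{2Q/\nu s}(\G)}^2\bigl(1+\|m\|_{L^{Q/\nu s}(\G)}\bigr)\bigl\{\|u_1\|_{L^2(\G)}^2+\|u_0\|_{H^{s\nu/2}(\G)}^2\bigr\},
\]
and integrating in $\tau$ over $[0,T]$ (absorbing the factor $T$ into the implicit constant) and combining with the energy bound gives \eqref{prop2.claim}; uniqueness of $u$ then follows from the estimate applied to the difference of two solutions, as in the previous proposition.

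I expect the main obstacle to be bookkeeping the two Lebesgue exponents correctly and checking that the dimensional balance in the Sobolev embedding \eqref{inclusions} is met on the nose — one must verify $b-a=Q(\tfrac1{\tilde q_0}-\tfrac1{q_0})$ with $\tilde q_0=2$, $q_0=\tfrac{2Q}{Q-\nu s}$, $b=\tfrac{s\nu}{2}$, $a=0$, and it is precisely the assumption $Q>\nu s$ that guarantees $q_0\in(2,\infty)$ so the embedding is applicable. A secondary technical point is justifying the Fubini/dominated-convergence exchange in \eqref{F,dc,Pl} for the new source term $f=-mu$, which requires knowing a priori that $f(\tau,\cdot)\in L^2(\G)$ for a.e.\ $\tau$; this follows once the energy bound on $\|\sqrt m(\cdot)u(\tau,\cdot)\|_{L^2(\G)}$ is in hand, together with $\|m u\|_{L^2}^2 \le \|m\|_{L^{2Q/\nu s}} \|\sqrt m u\|_{L^2} \|\sqrt m\|_{L^{2Q/\nu s}}$-type splitting, or more simply via the Hölder--Sobolev estimate above. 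Everything else is a routine repetition of the argument in Proposition \ref{prop1.clas.kg}.
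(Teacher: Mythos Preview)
Your proposal is correct and follows essentially the same route as the paper's own proof: energy conservation, the H\"older--Sobolev estimate $\|\sqrt{m}u_0\|_{L^2}^2\lesssim\|m\|_{L^{Q/\nu s}}\|\R^{s/2}u_0\|_{L^2}^2$ (with the same choice $\tilde q_0=2$, $q_0=\tfrac{2Q}{Q-\nu s}$, $b=\tfrac{s\nu}{2}$, $a=0$ in \eqref{inclusions}), the Fourier--Duhamel argument carried over verbatim from Proposition~\ref{prop1.clas.kg}, and then the same H\"older--Sobolev bound applied to $m^2$ in place of $m$ (using $\|m^2\|_{L^{Q/\nu s}}=\|m\|_{L^{2Q/\nu s}}^2$) to control $\|mu(\tau,\cdot)\|_{L^2}^2$. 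The bookkeeping of exponents and the role of the hypothesis $Q>\nu s$ are exactly as you describe.
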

\begin{proof}
Proceeding as in the proof of Proposition \ref{prop1.clas.kg}, we have 
\begin{equation}\label{prop2.et}
E(t)=E(0)\,,\quad \forall t \in [0,T]\,,
\end{equation}
where the energy estimate $E$ is given by 
\[
E(t)=\|u_t(t,\cdot)\|\L+\|\R^{\frac{s}{2}}u(t,\cdot)\|\L+\|\sqrt{m}(\cdot)u(t,\cdot)\|\L\,.
\]
Now, applying H\"older's inequality, we get 
\begin{equation}
\label{prop2.Hold}
 \|\sqrt{m} u_0\|\L \leq \|m\|_{L^{q^{'}}(\G)}\|u_{0}\|^{2}_{L^{2q}(\G)},
\end{equation}
where $1<q,q^{'}<\infty$, and $(q,q^{'})$ conjugate exponents, to be chosen later. Observe that if we apply \eqref{inclusions} for $u_0 \in H^{\frac{s \nu}{2}}(\G)$, $b=\frac{s \nu}{2}$, $a=0$, and $q_0=\frac{2Q}{Q-\nu s}$, then $\tilde{q}_0=2$, and we have 
\begin{equation}
\label{prop2.hold1}
\|u_0\|_{L^{q_{0}}(\G)} \lesssim \|\R^{\frac{s}{2}}u_0\|_{L^2(\G)}<\infty\,.
\end{equation}
Choosing $2q=q_0$ in \eqref{prop2.Hold} so that $q=\frac{Q}{Q-\nu s}$, we get $q^{'}=\frac{Q}{\nu s}$, so that 
\begin{equation}
\label{emb}
\|\sqrt{m}u_0\|\L \lesssim \|m\|_{L^{\frac{Q}{\nu s}}(\G)}\|\R^{\frac{s}{2}}u_0\|_{L^2(\G)}^{2}<\infty\,,
\end{equation}
and by \eqref{prop2.et} we can estimate
\begin{eqnarray}
\label{prop2.est.m}
\|\sqrt{m}(\cdot)u(t,\cdot)\|\L  & \leq & \|u_1\|\L+\|u_0\|_{H^{\frac{s \nu}{2}}(\G)}^{2}+\|\sqrt{m}u_0\|\L\nonumber\\
& \lesssim & \|u_1\|\L+\|u_0\|_{H^{\frac{s \nu}{2}}(\G)}^{2}+\|m\|_{L^{\frac{Q}{\nu s}}(\G)}\|u_0\|^{2}_{H^{\frac{s \nu}{2}}(\G)}\nonumber\\
& \leq & \left(1+\|m\|_{L^{\frac{Q}{\nu s}}(\G)}\right)\left\{\|u_1\|\L+\|u_0\|_{H^{\frac{s \nu}{2}}(\G)}^{2}\right\}\,,
\end{eqnarray}
 uniformly in $t \in [0,T]$.
Additionally, \eqref{prop2.et}, under the estimate \eqref{prop2.est.m}, implies
\begin{equation}
\label{prop2.other.est}
\|u_t(t,\cdot)\|\L\,,\|\R^{\frac{s}{2}}u(t,\cdot)\|\L \lesssim \left(1+\|m\|_{L^{\frac{Q}{\nu s}}(\G)}\right)\left\{\|u_1\|\L+\|u_0\|_{H^{\frac{s \nu}{2}}(\G)}^{2}\right\}.
\end{equation}
To show our claim \eqref{prop2.claim}, it suffices to show the desired estimate for the solution norm $\|u(t,\cdot)\|_{L^2(\G)}$. First we observe that by the Sobolev embeddings \eqref{inclusions} and  H\"older's inequality, using \eqref{emb} with $m$ instead of $\sqrt{m}$, 
and $\|m^2\|_{L^{\frac{Q}{\nu s}}(\G)}=\|m\|^2_{L^{\frac{2Q}{\nu s}}(\G)}$,
one obtains 
\[
\|mu(t,\cdot)\|\L \lesssim \|m\|^{2}_{L^{\frac{2Q}{\nu s}}}\|\R^{\frac{s}{2}}u(t,\cdot)\|\L\,,
\]
where the last combined with \eqref{prop2.other.est} yields
\begin{equation}\label{prop2.est.hold2}
\|mu(t,\cdot)\|\L \lesssim \|m\|^{2}_{\frac{2Q}{\nu s}} \left(1+\|m\|_{L^{\frac{Q}{\nu s}}(\G)}\right)\left\{\|u_1\|\L+\|u_0\|_{H^{\frac{s \nu}{2}}(\G)}^{2}\right\}\,.
\end{equation}
Finally, using arguments similar to those we developed in Proposition \ref{prop1.clas.kg}, together with the estimate \eqref{prop2.est.hold2} we get
\begin{eqnarray*}
\label{prop2.est.f}
\|u(t,\cdot)\|\L & \leq & \|u_0\|\L+\|u_1\|\L+\|m(\cdot)u(t,\cdot)\|\L\nonumber\\
& \lesssim & \left\{\|u_1\|\L+\|u_0\|_{H^{\frac{s \nu}{2}}(\G)}^{2}\right\}  \left\{1+\|m\|^{2}_{\frac{2Q}{\nu s}} \left(1+\|m\|_{L^{\frac{Q}{\nu s}}(\G)}\right)\right\}\,,
\end{eqnarray*}
 uniformly in $t \in [0,T]$. The uniqueness of $u$ is immediate by the estimate \eqref{prop2.claim}, and this finishes the proof of Proposition \ref{prop2}.
\end{proof}

\section{Existence and uniqueness of the very weak solution}
Here, we consider the case where the mass-term in \eqref{kg.eq} satisfies some moderateness properties. The latter is satisfied in cases where, for instance, $m$ has strong singularities, namely when  $m=\delta$ or $\delta^2$. This follows by Proposition \ref{prop.mol} for $\delta$, while we can understand $\delta^2$ as an approximating family or in the Colombeau sense.

\begin{definition}[Moderateness]
	\begin{enumerate}
		\item Let $X$ be a normed space of functions on $\G$.  A net of functions $(f_\epsilon)_\epsilon \in X$ is said to be \textit{$X$-moderate} if there exists $N \in \mathbb{N}$ such that \[
		\|f_\epsilon\|_{X}\lesssim \epsilon^{-N}\,,
		\]
		uniformly in $\epsilon \in (0,1]$.
		\item A net of functions $(u_\epsilon)_\epsilon$ in $C([0,T]; H^{\frac{s \nu}{2}}(\G))\cap C^{1}([0,T];L^2(\G))$ is said to be \textit{$C([0,T]; H^{\frac{s \nu}{2}}(\G))\cap C^{1}([0,T];L^2(\G))$-moderate}, or for brevity, \textit{$C^{1}$-moderate}, if there exists $N \in \mathbb{N}$ such that 
		\[
		\sup_{t \in [0,T]}\{\|u(t,\cdot)\|_{H^{\frac{s \nu}{2}}(\G)}+\|\partial_{t}u(t,\cdot)\|_{L^2(\G)}\}\lesssim \epsilon^{-N}\,,
		\]
		uniformly in $\epsilon \in (0,1]$.
	\end{enumerate}
\end{definition}

\begin{definition}[Negligibility]
\label{defn.negl}
{Let $Y$ be a normed space of functions on $\G$.
Let $(f_\epsilon)_\epsilon $, $(\tilde{f}_\epsilon)_\epsilon$ be two nets. Then, the net $(f_\epsilon-\tilde{f}_\epsilon)_\epsilon$ is called $Y$-negligible, if the following condition is satisfied
    \begin{equation}\label{def.cond.negl}
    \|f_\epsilon-\tilde{f}_\epsilon\|_{Y}\lesssim \epsilon^k\,,
    \end{equation}
    for all $k \in \mathbb{N}$, $\epsilon \in (0,1]$. In the case where $f=f(t,x)$ is a function also depending on $t \in [0,T]$, then the negligibility condition \eqref{def.cond.negl} can be regarded as
    \[
    \|f_\epsilon(t,\cdot)-\tilde{f}_\epsilon(t,\cdot)\|_{Y}\lesssim \epsilon^k\,,\quad \forall k \in \mathbb{N}\,,
    \]
    uniformly in $t \in [0,T]$.} The constant in the inequality \eqref{def.cond.negl} can depend on $k$ but not on $\epsilon$.
\end{definition}

In Definitions \ref{defn.vws.kg} and \ref{defn.uniq.kg}, we introduce the notion of the unique very weak solution to the Cauchy problem \eqref{kg.eq}. Our definitions are similar to the one introduced in \cite{GR15}, but here we measure moderateness and negligibility in terms of $L^p(\G)$ or $C^1$-seminorms rather than in terms of Gevrey-seminorms.
\begin{definition}[Very weak solution]\label{defn.vws.kg}
Let $(u_0,u_1) \in H^{\frac{s \nu}{2}}(\G)\times L^2(\G)$. Then, if there exists a non-negative $L^{\infty}(\G)$-moderate (or a $L^{\frac{2Q}{\nu s}}(\G)\cap L^{\frac{Q}{\nu s}}(\G)$-moderate, if we additionally require $Q> \nu s$) {\em approximating net} $(m_\epsilon)_\epsilon$, $m_\epsilon\geq 0$, to $m$,  so that the family $(u_\epsilon)_\epsilon \in C([0,T]; H^{\frac{s \nu}{2}}(\G))\cap C^{1}([0,T]; L^2(\G))$ which solves the $\epsilon$-parametrised problem 
	\begin{equation}\label{kg.reg}
		\begin{cases}
			\partial_{t}^{2}u_\epsilon(t,x) +\mathcal{R}^{s}u_\epsilon (t,x)+m_\epsilon (x)u_\epsilon(t,x)=0\,,\quad (t,x)\in [0,T]\times \mathbb{G},\\
			u_\epsilon(0,x)=u_{0}(x),\; \partial_{t}u_\epsilon(0,x)=u_{1}(x),\; x \in \G\,,	
		\end{cases}       
	\end{equation}
for all $\epsilon \in (0,1]$, is $C^{1}$-moderate, then net $(u_\epsilon)_\epsilon$ is said to be a \textit{very weak solution to the Cauchy problem \eqref{kg.eq}}.

We will also use a modification of this notion for data in the space $\mathcal{E}^{'}(\G)$ of compactly supported distributions. 
Thus, let $u_0,u_1\in \mathcal{E}^{'}(\G)$ or as in Remark \ref{rem:datas}.
Then, if there exists a non-negative $L^{\infty}(\G)$-moderate (or a $L^{\frac{2Q}{\nu s}}(\G)\cap L^{\frac{Q}{\nu s}}(\G)$-moderate, if we additionally require $Q> \nu s$) {\em approximating net} $(m_\epsilon)_\epsilon$, $m_\epsilon\geq 0$, to $m$, and moderate regularisations  $(u_{0,\epsilon},u_{1,\epsilon}) \in H^{\frac{s \nu}{2}}(\G)\times L^2(\G)$ of $(u_0,u_1)$,  so that the family $(u_\epsilon)_\epsilon \in C([0,T]; H^{\frac{s \nu}{2}}(\G))\cap C^{1}([0,T]; L^2(\G))$ which solves the $\epsilon$-parametrised problem 
	\begin{equation}\label{kg.reg2}
		\begin{cases}
			\partial_{t}^{2}u_\epsilon(t,x) +\mathcal{R}^{s}u_\epsilon (t,x)+m_\epsilon (x)u_\epsilon(t,x)=0\,,\quad (t,x)\in [0,T]\times \mathbb{G},\\
			u_\epsilon(0,x)=u_{0,\epsilon}(x),\; \partial_{t}u_\epsilon(0,x)=u_{1,\epsilon}(x),\; x \in \G\,,	
		\end{cases}       
	\end{equation}
for all $\epsilon \in (0,1]$, is $C^{1}$-moderate, then net $(u_\epsilon)_\epsilon$ is said to be a \textit{very weak solution to the Cauchy problem \eqref{kg.eq}}.

We note that we can also combine both cases into one, since for $(u_0,u_1) \in H^{\frac{s \nu}{2}}(\G)\times L^2(\G)$ we can take the trivial regularisations 
$u_{0,\epsilon}=u_0$ and $u_{1,\epsilon}=u_1$ for all $\epsilon>0.$
Consequently, we can also adapt this definition for the data
$(u_0,u_1)\in H^{\frac{s\nu}{2}}(\G)\times \mathcal{E}^{'}(\G)$ or for
$(u_0,u_1) \in \mathcal{E}^{'}(\G)
\times L^2(\G)$.

\end{definition}

\begin{remark} In Definition \ref{defn.vws.kg} above we ask for $m_\epsilon$ to {\em approximate} $m$, to allow for more flexibility. This should be understood as follows. If $m\in \mathcal{D}'(\G)$ is a distribution, we can understand it as a {\em regularisation}, namely, the assumption in Definition \ref{defn.vws.kg} is that there is a Friedrichs mollifier $\psi\geq 0$ such that $m_\epsilon=m*\psi_\epsilon.$
However, the word approximation allows for more flexibility, for example, we can in principle generate an approximating family with a net $\tilde{m}_\epsilon$ such that the one we will discuss in \eqref{mex}. Moreover, this context allows us to start with $m$ being more singular than a distribution: for example, if $m=\delta^2$ we can think of an approximating family $m_\epsilon=\psi_\epsilon^2$. See also Remark \ref{rem1p} for a continuation of this discussion.
\end{remark}

We now formulate the very weak existence result, corresponding to two possibilities of regularising with families $(m_\epsilon)_\epsilon$ with different properties, corresponding to the existence results in Proposition \ref{prop1.clas.kg} and Proposition \ref{prop2}.
\begin{theorem}
	\label{thm.ex.kg}
	Let $(u_0,u_1) \in H^{\frac{s \nu}{2}}(\G)\times L^2(\G)$. 
Then, the Cauchy problem \eqref{kg.eq} has a very weak solution.
\end{theorem}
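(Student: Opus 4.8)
The plan is to reduce the existence of a very weak solution to an application of the classical well-posedness results (Proposition \ref{prop1.clas.kg} and Proposition \ref{prop2}) together with a quantitative control of the norms of a regularising net $(m_\epsilon)_\epsilon$. First I would fix a Friedrichs mollifier $\psi\geq 0$ as in \eqref{mol} and set $m_\epsilon = m*\psi_\epsilon$ (or, if $m$ is more singular than a distribution, an approximating net of the type described in the remark following Definition \ref{defn.vws.kg}, e.g. $m_\epsilon=\psi_\epsilon^2$ when $m=\delta^2$). Since $m\geq 0$ and $\psi\geq 0$, we have $m_\epsilon\geq 0$, so the hypotheses of the classical propositions on the sign of the mass are met. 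The key point is that such a net is $L^\infty(\G)$-moderate (respectively $L^{\frac{2Q}{\nu s}}(\G)\cap L^{\frac{Q}{\nu s}}(\G)$-moderate when $Q>\nu s$): convolving a compactly supported distribution with $\psi_\epsilon$ produces a smooth function whose relevant norms blow up at worst polynomially in $\epsilon^{-1}$, because $\|\psi_\epsilon\|_{L^1(\G)}=1$ while $\|\psi_\epsilon\|_{L^p(\G)}\lesssim \epsilon^{-Q/p'}$ and each $X$-derivative costs a further factor $\epsilon^{-v_i}$ — this is exactly the content of the cited Proposition \ref{prop.mol}, which I would invoke rather than reprove.

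Once the net $(m_\epsilon)_\epsilon$ is in place, for each fixed $\epsilon\in(0,1]$ the coefficient $m_\epsilon$ is a non-negative function lying in $L^\infty(\G)$ (or in $L^{\frac{2Q}{\nu s}}(\G)\cap L^{\frac{Q}{\nu s}}(\G)$), so Proposition \ref{prop1.clas.kg} (respectively Proposition \ref{prop2}) applies to the regularised Cauchy problem \eqref{kg.reg} with the given data $(u_0,u_1)\in H^{\frac{s\nu}{2}}(\G)\times L^2(\G)$. This yields, for every $\epsilon$, a unique solution $u_\epsilon\in C([0,T];H^{\frac{s\nu}{2}}(\G))\cap C^1([0,T];L^2(\G))$ together with the energy estimate
\[
\sup_{t\in[0,T]}\left\{\|u_\epsilon(t,\cdot)\|_{H^{\frac{s\nu}{2}}(\G)}+\|\partial_t u_\epsilon(t,\cdot)\|_{L^2(\G)}\right\}\lesssim \bigl(1+\|m_\epsilon\|_{L^\infty(\G)}\bigr)\left\{\|u_1\|_{L^2(\G)}+\|u_0\|_{H^{\frac{s\nu}{2}}(\G)}\right\}
\]
in the first case, and the analogous bound \eqref{prop2.claim} in the second. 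Feeding the moderateness bound $\|m_\epsilon\|_{L^\infty(\G)}\lesssim \epsilon^{-N_0}$ (respectively the bounds on the two Lebesgue norms) into the right-hand side, and noting that $\|u_1\|_{L^2(\G)}+\|u_0\|_{H^{\frac{s\nu}{2}}(\G)}$ is a fixed finite constant independent of $\epsilon$, we conclude that
\[
\sup_{t\in[0,T]}\left\{\|u_\epsilon(t,\cdot)\|_{H^{\frac{s\nu}{2}}(\G)}+\|\partial_t u_\epsilon(t,\cdot)\|_{L^2(\G)}\right\}\lesssim \epsilon^{-N}
\]
for some $N\in\mathbb{N}$, uniformly in $\epsilon\in(0,1]$. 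This is precisely the $C^1$-moderateness required in Definition \ref{defn.vws.kg}, so $(u_\epsilon)_\epsilon$ is a very weak solution and the theorem follows.

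I do not expect a serious obstacle here: the argument is essentially a concatenation of results already established in the excerpt, and the only genuine input is the polynomial-in-$\epsilon^{-1}$ control of $\|m_\epsilon\|$ in the appropriate norm. The one point requiring mild care is ensuring that the chosen approximating net actually inherits moderateness in the exact norm demanded — $L^\infty$ if one regularises by plain mollification of a general distribution may fail (a distribution need not mollify to an $L^\infty$-moderate net unless it is, say, of finite order supported compactly, in which case $\|m*\psi_\epsilon\|_{L^\infty}\lesssim \epsilon^{-N}$ does hold), so one should either appeal to Proposition \ref{prop.mol} for the $L^\infty$ case or fall back on the $L^{\frac{2Q}{\nu s}}\cap L^{\frac{Q}{\nu s}}$ case (valid when $Q>\nu s$), which is more forgiving; the statement of the theorem as written only asserts existence, so it suffices to produce \emph{one} admissible net, and either route does that. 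Finally I would remark that the solution $u_\epsilon$ at each level $\epsilon$ is unique by the uniqueness clauses of Propositions \ref{prop1.clas.kg} and \ref{prop2}, so the constructed very weak solution is canonically attached to the chosen net $(m_\epsilon)_\epsilon$.
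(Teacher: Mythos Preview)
Your proposal is correct and follows essentially the same approach as the paper: both assume (or construct) a non-negative $L^\infty$-moderate (resp.\ $L^{\frac{2Q}{\nu s}}\cap L^{\frac{Q}{\nu s}}$-moderate) net $(m_\epsilon)_\epsilon$, apply the energy estimates of Proposition~\ref{prop1.clas.kg} or Proposition~\ref{prop2} to the regularised problem~\eqref{kg.reg}, and read off the $C^1$-moderateness of $(u_\epsilon)_\epsilon$ from the polynomial bound on $\|m_\epsilon\|$. The paper's proof is in fact terser than yours---it simply takes the moderate net as given---whereas you spell out its construction via mollification and Proposition~\ref{prop.mol}; this extra care is welcome but does not change the argument.
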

\begin{proof}
Let $u_0,u_1$ be as in the hypothesis. If $(m_\epsilon)_\epsilon$ is $L^{\infty}(\G)$-moderate (or $L^{\frac{2Q}{\nu s}}(\G)\cap L^{\frac{Q}{\nu s}}(\G)$-moderate), then, since $m_\epsilon \geq 0$, by using \eqref{prop1.claim} (or \eqref{prop2.claim}, respectively) we get 
	\[
\|u_\epsilon(t,\cdot)\|_{H^{\frac{s \nu}{2}}(\G)}+\|\partial_{t}u_\epsilon(t,\cdot)\|_{L^2(\G)} \lesssim \epsilon^{-N}\,,\quad N \in \mathbb{N}\,,
	\]
	for all $t \in [0,T]$ and for any $\epsilon \in (0,1]$. This means that the family of solutions $(u_\epsilon)_\epsilon$ is $C^1$-moderate, and completes the proof of Theorem \ref{thm.ex.kg}.
\end{proof}

The uniqueness of the very weak solution to the Cauchy problem \eqref{kg.eq} can be understood as if a negligible change of the net $(m_\epsilon)_\epsilon$ does not affect the asymptotic behaviour of the family of the very weak solutions. 
In other words, negligible changes of the approximation $m_\epsilon$ of $m$ lead to negligible changes in the solution family $u_\epsilon$, with an appropriate choices of norms to understand the negligibility.
Formally, we have the following definition.
\begin{definition}\label{defn.uniq.kg}
{Let $X$ and $Y$ be normed spaces of functions on $\G$.
We say that the Cauchy problem \eqref{kg.eq} has an $(X,Y)$-unique very weak solution, if for all $X$-moderate nets
	$m_\epsilon\geq 0, \tilde{m}_\epsilon\geq 0,$ such that $(m_\epsilon-\tilde{m}_\epsilon)_\epsilon$ is $Y$-negligible}, it follows that
	\[
	\|u_\epsilon(t,\cdot)-\tilde{u}_\epsilon(t,\cdot)\|_{L^2(\G)} \leq C_N \epsilon^N\,,\quad \forall N \in \mathbb{N}\,,
	\]
	 uniformly in $t \in [0,T]$, and for all $\epsilon \in (0,1]$, where $(u_\epsilon)_\epsilon$ and $(\tilde{u}_\epsilon)_\epsilon$ are the families of solutions corresponding to the $\epsilon$-parametrised problems 
	 \begin{equation}\label{kg.reg.tild}
		\begin{cases}
			\partial_{t}^{2}u_\epsilon(t,x) +\mathcal{R}^{s}u_\epsilon (t,x)+m_\epsilon(x)u_\epsilon(t,x)=0\,,\quad (t,x)\in [0,T]\times \mathbb{G},\\
			u_\epsilon(0,x)=u_{0,\epsilon}(x),\;\partial_{t}u_\epsilon(0,x)=u_{1,\epsilon}(x),\; x \in \G\,,	
		\end{cases}       
	\end{equation}
	and 
\begin{equation}\label{kg.ret.notil}
		\begin{cases}
			\partial_{t}^{2}\tilde{u}_\epsilon(t,x) +\mathcal{R}^{s}\tilde{u}_\epsilon (t,x)+\tilde{m}_\epsilon(x)\tilde{u}_\epsilon(t,x)=0\,,\quad (t,x)\in [0,T]\times \mathbb{G},\\
			\tilde{u}_\epsilon(0,x)=\tilde{u}_{0,\epsilon}(x),\;\partial_{t}\tilde{u}_\epsilon(0,x)=\tilde{u}_{1,\epsilon}(x),\; x \in \G\,,	
		\end{cases}       
	\end{equation}
	respectively.	
\end{definition}

{
\begin{remark}\label{rem1p}
We note that in Definition 3 in the previous paper \cite{ARST21a}, the word `regularisation' needs to be understood, in general, as an approximation not necessarily depending on the classical convolution and specific mollifiers. In this case, our definition of the uniqueness of the very weak solutions here includes also the version in Definition 3 in \cite{ARST21a}, but Definition \ref{defn.uniq.kg} makes it more rigorous.
To clarify this further, we can take, for example, $m_\epsilon$ to be a regularisation of $m$ by a convolution (if $m$ is a distribution), and take 
\begin{equation}\label{mex}
\tilde{m}_\epsilon=m_\epsilon+e^{-1/\epsilon}.
	\end{equation}
Then the net 
$(m_\epsilon-\tilde{m}_\epsilon)_\epsilon$ is $L^\infty$-negligible, and so it is suitable to be used in Definition \ref{defn.uniq.kg}. If $m=\delta^2$, we can take $m_\epsilon=\psi_\epsilon^2$ for a Friedrichs mollifier $\psi$, and still, for example, $\tilde{m}_\epsilon$ as in
\eqref{mex}. We also note that Definition \ref{defn.uniq.kg} can be also interpreted as stability. In fact, in Definition \ref{defn.uniq.kg} we do not assume $m_\epsilon$ to approximate $m$ since we can prove the required property without this assumption (as in Theorem \ref{thm.uniq.kg.1} and Theorem \ref{thm.uniq.kg.2}). This allows for our results to be applicable to cases like $m=\delta^2$, since with this approach we do not need to explain in which sense $m_\epsilon=\psi_\epsilon^2$ approximates $m=\delta^2$.
\end{remark}
}

We now give some clarification of the moderateness assumption of the regularisations (or appro\-ximations).
{Let us underline that, the global structure of $\mathcal{E}^{'}$-distributions, implies that the assumption on the $L^{p}$-moderateness, for $p \in [1,\infty]$, is natural for nets that arise as regularisations of a compactly supported distribution in $\mathcal{E}^{'}$ via convolutions with a mollifier as in \eqref{mol}.}
 
{ \begin{proposition}\label{prop.mol}
Let $v \in \mathcal{E}^{'}(\G)$, and let 
$v_\epsilon=v*\psi_\epsilon$ be obtained as
the convolution of $v$ with a mollifier $\psi_\epsilon$ as in \eqref{mol}.
Then the regularising net $(v_\epsilon)_\epsilon$ is $L^{p}(\G)$-moderate for any $p \in [1,\infty]$.
\end{proposition}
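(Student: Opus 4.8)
The plan is to use the standard structure of compactly supported distributions together with Young-type convolution inequalities adapted to the homogeneous group $\G$. First I would recall that since $v \in \mathcal{E}'(\G)$, by the structure theorem for compactly supported distributions there exist a compact set $K \subset \G$, a constant $C>0$, and finitely many left-invariant differential operators $X^{\alpha}$ (with $[\alpha]$ bounded by some fixed order $M$) such that $v = \sum_{[\alpha] \leq M} X^{\alpha} g_{\alpha}$ for continuous (hence $L^{\infty}$, compactly supported) functions $g_{\alpha}$; equivalently, for every smooth $\varphi$, $|\langle v, \varphi\rangle| \lesssim \sup_{[\alpha]\leq M,\, x \in K'} |X^{\alpha}\varphi(x)|$ on a slightly larger compact $K'$. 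This reduces everything to controlling $\|(X^{\alpha} g_{\alpha})*\psi_\epsilon\|_{L^p(\G)} = \|g_{\alpha}*(X^{\alpha}\psi_\epsilon)\|_{L^p(\G)}$, using that convolution with the left-invariant vector fields can be moved onto the mollifier.

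The key steps, in order, are as follows. (1) Write $v$ in the above form and reduce to estimating $\|g * (X^{\alpha}\psi_\epsilon)\|_{L^p(\G)}$ for $g \in L^{\infty}_{\mathrm{comp}}(\G) \subset L^1(\G)\cap L^\infty(\G)$. (2) Compute the scaling of the dilated mollifier: from $\psi_\epsilon(x)=\epsilon^{-Q}\psi(D_{\epsilon^{-1}}x)$ and the homogeneity $X^{\alpha}(f \circ D_r) = r^{[\alpha]} (X^{\alpha}f)\circ D_r$ of the left-invariant vector fields of weighted degree $[\alpha]$, deduce $X^{\alpha}\psi_\epsilon = \epsilon^{-[\alpha]}(X^{\alpha}\psi)_\epsilon$, so that $\|X^{\alpha}\psi_\epsilon\|_{L^1(\G)} = \epsilon^{-[\alpha]}\|X^{\alpha}\psi\|_{L^1(\G)}$, a fixed constant times $\epsilon^{-[\alpha]}$. (3) Apply Young's inequality on the unimodular group $\G$ (the Haar measure being bi-invariant Lebesgue measure): $\|g * h\|_{L^p(\G)} \leq \|g\|_{L^p(\G)}\|h\|_{L^1(\G)}$. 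Taking $g = g_{\alpha}$ (which lies in $L^p(\G)$ for all $p\in[1,\infty]$ since it is bounded with compact support) and $h = X^{\alpha}\psi_\epsilon$, we get $\|g_{\alpha}*(X^{\alpha}\psi_\epsilon)\|_{L^p(\G)} \lesssim \epsilon^{-[\alpha]}\|g_{\alpha}\|_{L^p(\G)}$. (4) Sum over the finitely many $\alpha$ with $[\alpha]\leq M$ and take $N = M$ (or the smallest integer $\geq M$): this yields $\|v_\epsilon\|_{L^p(\G)} \lesssim \epsilon^{-N}$ uniformly in $\epsilon \in (0,1]$, i.e. $(v_\epsilon)_\epsilon$ is $L^p(\G)$-moderate.

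The main obstacle — really the only non-bookkeeping point — is pinning down the correct form of the structure theorem on a general graded Lie group and verifying that the differential operators appearing there can be taken left-invariant and homogeneous, so that the identity $(X^{\alpha}g)*\psi_\epsilon = g*(X^{\alpha}\psi_\epsilon)$ and the scaling $X^{\alpha}\psi_\epsilon = \epsilon^{-[\alpha]}(X^{\alpha}\psi)_\epsilon$ are legitimate; this is where one leans on the identification of left-invariant operators with $\mathfrak{U}(\mathfrak{g})$ and on the homogeneity \eqref{dw} of the canonical vector fields, and one should cite the relevant statements in \cite{FR16} (e.g.\ on the structure of $\mathcal{E}'$ and on homogeneous distributions). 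Once that is in place, steps (2)–(4) are routine, with Young's inequality on the unimodular group $\G$ doing the analytic work and the exponent $N$ simply recording the distributional order of $v$.
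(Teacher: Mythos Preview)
Your approach is essentially identical to the paper's: both invoke the structure theorem for $\mathcal{E}'(\G)$ to write $v$ as a finite sum of derivatives of compactly supported continuous functions, transfer the derivatives onto the mollifier, use the homogeneity of the vector fields under $D_{\epsilon^{-1}}$ to extract the factor $\epsilon^{-[\alpha]}$, and conclude an $L^p$ bound. Your step~(3), invoking Young's inequality $\|g*h\|_{L^p}\le \|g\|_{L^p}\|h\|_{L^1}$ together with $\|(X^\alpha\psi)_\epsilon\|_{L^1}=\|X^\alpha\psi\|_{L^1}$, is in fact more explicit than the paper's closing sentence, which simply observes that the relevant functions are compactly supported and hence in $L^p$.

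One small technical correction: on a non-abelian group the identity $(X^\alpha g)*\psi_\epsilon = g*(X^\alpha\psi_\epsilon)$ does \emph{not} hold for left-invariant $X^\alpha$. What one has instead is $(X^\alpha g)*\psi_\epsilon = g*(\tilde{X}^\alpha\psi_\epsilon)$, where $\tilde{X}^\alpha$ is the corresponding right-invariant differential operator (this follows from integration by parts and the chain rule for $y\mapsto y^{-1}x$). This does not damage your scheme: since the dilations $D_r$ are group automorphisms, right-invariant vector fields are homogeneous of the same weights, so $\tilde{X}^\alpha\psi_\epsilon=\epsilon^{-[\alpha]}(\tilde{X}^\alpha\psi)_\epsilon$ and steps (2)--(4) go through verbatim with $\tilde{X}^\alpha$ in place of $X^\alpha$. (The paper's own computation, which uses coordinate derivatives $\partial^\beta$ and the pairing notation, is equally informal on this point.)
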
}
{\begin{proof}
Recall, that for $v \in \mathcal{E}^{'}(\G)$ we can find $m \in \mathbb{N}$ and compactly supported continuous functions $f_\beta \in C(\G)$ such that 
\[
v=\sum_{|\beta|\leq m} \partial^{\beta}f_\beta\,,
\]
where $|\beta|$ denoted the length of the multi-index $\beta$. Considering the convolution of $v$ with a mollifier $\psi_\epsilon$ as in \eqref{mol} yields
		\[
		v_\epsilon=	v \ast \psi_{\epsilon}=\left(\sum_{|\beta|\leq m} \partial^{\beta}f_\beta \right) \ast \psi_\epsilon=\sum_{|\beta|\leq m} \left(\partial^{\beta}f_\beta \ast \psi_\epsilon \right)\,,
		\] 
		where each term in the above sum can be rewritten as 
		\begin{eqnarray*}
		\partial^{\beta}f_\beta \ast \psi_\epsilon & = & \langle \partial^{\beta}f_\beta (yx^{-1}),\psi_\epsilon(x)\rangle=(-1)^{|\beta|}\langle f_\beta(yx^{-1}), \partial^{\beta}\psi_\epsilon(x)\rangle\\
		& =& (-1)^{|\beta|}\epsilon^{-Q}\langle f_\beta,\partial^{\beta}\psi(D_{\epsilon^{-1}}(x)\rangle \\
	& =& 	 (-1)^{|\beta|}\epsilon^{-Q-[\beta]}\langle f_\beta,(\partial^{\beta}\psi)(D_{\epsilon^{-1}}(x)\rangle\,,
\end{eqnarray*}
where $[\beta]$ stands for the homogeneous length of $\beta$.

Finally, since $f_\beta, \psi$ are compactly supported, we get $ f_\beta,(\partial^{\beta}\psi)(D_{\epsilon^{-1}}\cdot)\in L^p(\G)$, for all $p$, and this finishes the proof of Proposition \ref{prop.mol}.
\end{proof}
} 
\begin{remark}\label{rem:datas}
It is straightforward to extend Proposition \ref{prop.mol} to its Sobolev spaces version.
Consequently, in view of this, the assumptions of Theorem \ref{thm.ex.kg} can be partially relaxed to $u_0,u_1\in \mathcal{E}^{'}(\G)$, or to 
$(u_0,u_1)\in H^{\frac{s\nu}{2}}(\G)\times \mathcal{E}^{'}(\G)$ or to
$(u_0,u_1) \in \mathcal{E}^{'}(\G)
\times L^2(\G)$, so that Theorem 
\ref{thm.ex.kg} also holds true for
$(u_0,u_1) \in \{H^{\frac{s\nu}{2}}(\G) \cup \mathcal{E}^{'}(\G)\} \times \{L^2(\G) \cup \mathcal{E}^{'}(\G)\}$.
Similar assumptions can also be made in Theorems \ref{thm.uniq.kg.1} and \ref{thm.uniq.kg.2} below, that show the uniqueness of the very weak solution to the Cauchy problem \eqref{kg.eq}.
\end{remark}

\begin{theorem}\label{thm.uniq.kg.1}
	 Suppose that $(u_0,u_1) \in \{H^{\frac{s\nu}{2}}(\G) \cup \mathcal{E}^{'}(\G)\} \times \{L^2(\G) \cup \mathcal{E}^{'}(\G)\}$.
	 {Then, the very weak solution to the Cauchy problem \eqref{kg.eq} is
$(L^\infty(\G), L^\infty(\G))$-unique.}
\end{theorem}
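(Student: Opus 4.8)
The plan is to reduce the problem to an energy estimate for the difference $w_\epsilon := u_\epsilon - \tilde u_\epsilon$, which satisfies a Cauchy problem with zero initial data and a source term that is quantitatively small. First I would subtract the two $\epsilon$-parametrised equations \eqref{kg.reg.tild} and \eqref{kg.ret.notil}; since both solutions share the same (trivially regularised or moderately regularised) initial data, $w_\epsilon(0,\cdot)=0$ and $\partial_t w_\epsilon(0,\cdot)=0$, and $w_\epsilon$ solves
\[
\partial_t^2 w_\epsilon + \mathcal{R}^s w_\epsilon + m_\epsilon w_\epsilon = (\tilde m_\epsilon - m_\epsilon)\tilde u_\epsilon =: g_\epsilon\,.
\]
This is exactly the inhomogeneous version of \eqref{kg.eq} with the non-negative bounded coefficient $m_\epsilon$, so I would re-run the argument of Proposition \ref{prop1.clas.kg} (multiplying by $\partial_t w_\epsilon$, integrating over $\G$, and passing through the group Fourier transform and the Duhamel representation \eqref{prop1.solut.fix.pi}) but now keeping the source term $g_\epsilon$. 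The net effect, after applying Plancherel as in \eqref{before.Pl}--\eqref{F,dc,Pl}, is a bound of the form
\[
\|w_\epsilon(t,\cdot)\|_{H^{\frac{s\nu}{2}}(\G)} + \|\partial_t w_\epsilon(t,\cdot)\|_{L^2(\G)}
\lesssim (1+\|m_\epsilon\|_{L^\infty(\G)})\int_0^T \|g_\epsilon(\tau,\cdot)\|_{L^2(\G)}\,d\tau\,,
\]
uniformly in $t\in[0,T]$, where the constant does not depend on $\epsilon$ except through the explicit $\|m_\epsilon\|_{L^\infty}$ factor.

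Next I would estimate the source. By Hölder, $\|g_\epsilon(\tau,\cdot)\|_{L^2(\G)} \le \|\tilde m_\epsilon - m_\epsilon\|_{L^\infty(\G)}\,\|\tilde u_\epsilon(\tau,\cdot)\|_{L^2(\G)}$. The first factor is $\le C_k \epsilon^k$ for every $k$ by the $L^\infty$-negligibility hypothesis on $(m_\epsilon-\tilde m_\epsilon)_\epsilon$. The second factor is controlled because $(\tilde m_\epsilon)_\epsilon$ is $L^\infty$-moderate and non-negative, so Proposition \ref{prop1.clas.kg} (applied to the tilde problem) and the very weak solution framework give $\sup_{\tau\in[0,T]}\|\tilde u_\epsilon(\tau,\cdot)\|_{L^2(\G)} \lesssim \epsilon^{-N}$ for some fixed $N\in\mathbb N$. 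Likewise $\|m_\epsilon\|_{L^\infty(\G)}\lesssim \epsilon^{-N'}$ for some fixed $N'$ by the moderateness of $(m_\epsilon)_\epsilon$. Combining, for every $k\in\mathbb N$,
\[
\|w_\epsilon(t,\cdot)\|_{L^2(\G)} \lesssim \epsilon^{-N'}\cdot T\cdot C_k\epsilon^k\cdot \epsilon^{-N} = C_k' \,\epsilon^{\,k-N-N'}\,,
\]
and since $k$ is arbitrary and $N,N'$ are fixed, the right-hand side is $\le C_M\epsilon^M$ for every $M\in\mathbb N$. This is precisely the negligibility conclusion required by Definition \ref{defn.uniq.kg}, so the very weak solution is $(L^\infty(\G),L^\infty(\G))$-unique.

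The main obstacle I anticipate is not conceptual but bookkeeping: one must make sure that the energy/Fourier argument of Proposition \ref{prop1.clas.kg} genuinely carries over to the \emph{inhomogeneous} equation for $w_\epsilon$ with the extra source $g_\epsilon$, i.e. that Duhamel's term in \eqref{prop1.solut.fix.pi} is handled by the same $|\sin|\le 1$ and $|\sin(t\beta^s)|\le t\beta^s$ dichotomy followed by Cauchy--Schwarz in $\tau$, so that no negative power of $\beta$ survives and no regularity of $g_\epsilon$ beyond $L^1_t L^2_x$ is needed. One should also note that the $m_\epsilon w_\epsilon$ term on the left is non-negative and may simply be discarded (or kept, as in the proof of Proposition \ref{prop1.clas.kg}) when estimating from above, which is why only $\|m_\epsilon\|_{L^\infty}$ — and not $\|\tilde m_\epsilon\|_{L^\infty}$ — enters the energy constant; the $\|\tilde m_\epsilon\|_{L^\infty}$ moderateness is used only through the a priori bound on $\tilde u_\epsilon$. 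If one prefers the variant where $(u_0,u_1)\in\mathcal{E}'(\G)$ and genuinely $\epsilon$-dependent regularisations $(u_{0,\epsilon},u_{1,\epsilon})$ are used, the same argument applies verbatim provided one uses \emph{the same} data regularisations for both $u_\epsilon$ and $\tilde u_\epsilon$, so that $w_\epsilon$ still has vanishing Cauchy data; this is the natural reading of Definition \ref{defn.uniq.kg} and of Remark \ref{rem:datas}.
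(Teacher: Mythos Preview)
Your proposal is correct and follows essentially the same route as the paper: subtract the two regularised problems, obtain the inhomogeneous equation for $w_\epsilon=u_\epsilon-\tilde u_\epsilon$ with zero Cauchy data and source $(\tilde m_\epsilon-m_\epsilon)\tilde u_\epsilon$, apply the $L^\infty$ energy estimate of Proposition~\ref{prop1.clas.kg} via Duhamel, and conclude by combining $L^\infty$-moderateness of $(m_\epsilon)$, $C^1$-moderateness of $(\tilde u_\epsilon)$, and $L^\infty$-negligibility of $(m_\epsilon-\tilde m_\epsilon)$. The only packaging difference is that the paper makes the Duhamel step explicit by introducing the auxiliary homogeneous solution $V_\epsilon(t,x;\sigma)$ and then uses Minkowski's integral inequality, whereas you propose to carry the source term directly through the Fourier/ODE computation of Proposition~\ref{prop1.clas.kg}; both yield the same bound $(1+\|m_\epsilon\|_{L^\infty})\int_0^T\|g_\epsilon(\sigma,\cdot)\|_{L^2}\,d\sigma$.
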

\begin{proof}
Let  $(u_\epsilon)_\epsilon$ and $(\tilde{u}_\epsilon)_\epsilon$ be the families of solutions corresponding to the Cauchy problems \eqref{kg.reg.tild} and \eqref{kg.ret.notil}, respectively. If we denote by $U_\epsilon(t,\cdot):=u_\epsilon(t,\cdot)-\tilde{u}_\epsilon(t,\cdot)$, then $U_\epsilon$ satisfies
	\begin{equation}\label{kg.uniq}
		\begin{cases}
			\partial_{t}^{2}U_\epsilon(t,x) +\mathcal{R}^{s}U_\epsilon (t,x)+m_\epsilon(x)U_\epsilon(t,x)=f_\epsilon(t,x)\,,\quad (t,x)\in [0,T]\times \mathbb{G},\\
			U_\epsilon(0,x)=0\,,\partial_{t}U_\epsilon(0,x)=0\,, x \in \G\,,	
		\end{cases}       
	\end{equation}
where $f_\epsilon(t,x):=(\tilde{m}_\epsilon(x)-m_\epsilon(x))\tilde{u}_\epsilon(t,x)$. \\

The solution of the Cauchy problem \eqref{kg.uniq} can be expressed in terms of the solution to the corresponding homogeneous Cauchy problem using Duhamel's principle. Indeed, if for a fixed $\sigma$, $V_\epsilon(t,x;\sigma)$ is the solution of the homogeneous problem 
\begin{equation}\label{hom.Ve}
\begin{cases*}
	\partial_{t}^{2}V_\epsilon(t, x;\sigma)+\R^s V_\epsilon(t, x;\sigma)+m_\epsilon V_\epsilon(t, x;\sigma)=0\,,\quad\text{in}\, (\sigma,T] \times \G,\\
	V_\epsilon(t, x;\sigma)=0\,,\quad \partial_{t}V_\epsilon(t, x;\sigma)=f_\epsilon(\sigma,x), \,\quad \text{on}\,\{t=\sigma\}\times \G\,,
\end{cases*}
\end{equation}
then $U_\epsilon$ is given by $U_\epsilon(t,x)=\int_{0}^{t}V_\epsilon(t,x;\sigma)\,d\sigma$. 

Since by Minkowski's integral inequality we know
\[
\| \int_{0}^{t}V_{\epsilon}(t, \cdot;\sigma)\,d\sigma \|_{L^2(\G)}\leq \int_{0}^{t}\|V_\epsilon(t, \cdot;\sigma)\|_{L^2(\G)}\,d\sigma\,,
\]
using the energy estimate \eqref{prop1.claim} to control $L^2(\G)$-norm of the solution  $V_\epsilon$ to the homogeneous problem \eqref{hom.Ve}, and subsequently of $U_\epsilon$, we get
 \begin{eqnarray*}
  \|U_\epsilon(t,\cdot)\|_{L^2(\G)} & \leq & \int_{0}^{T}\|V_\epsilon(t, \cdot;\sigma)\|_{L^2(\G)}\,d\sigma\\
  & \lesssim & (1+\|m_\epsilon\|_{L^{\infty}(\G)})\int_{0}^{T}\|f_\epsilon(\sigma,\cdot)\|_{L^2(\G)}\,d\sigma\\
		& \lesssim & (1+\|m_\epsilon\|_{L^{\infty}(\G)}) \|\tilde{m}_\epsilon-m_\epsilon\|_{L^{\infty}(\G)}\int_{0}^{T}\|\tilde{u}_\epsilon(\sigma,\cdot)\|_{L^2(\G)}\,d\sigma\,,
 \end{eqnarray*} 
where we use the estimate 
\[
\|f_\epsilon(\sigma,\cdot)\|_{L^2(\G)}=\|(\tilde{m}_\epsilon-m_\epsilon)(\cdot)\tilde{u}_\epsilon(\sigma,\cdot)\|_{L^2(\G)}\leq  \|\tilde{m}_\epsilon-m_\epsilon\|_{L^{\infty}(\G)}\|\tilde{u}_\epsilon(\sigma,\cdot)\|_{L^2(\G)}\,.
\]
Now, using the fact that $(m_\epsilon)_\epsilon$ is $L^{\infty}(\G)$-moderate, while also that the net $(\tilde{u}_\epsilon)_\epsilon$, as being a very weak solution to the Cauchy problem \eqref{kg.reg.tild}, is $C^1$-moderate and that $(m_\epsilon-\tilde{m}_\epsilon)_\epsilon$ is $L^{\infty}$-negligible, we get that 
\[
\|U_\epsilon(t,\cdot)\|_{L^2(\G)} \lesssim \epsilon^{-N_1+N}  \int_{0}^{T}\epsilon^{-N_2}\,d\sigma = T \, \epsilon^{-N_1-N_2+N} \,,
\]
for some $N_1,N_2 \in \mathbb{N}$, and for all $N \in \mathbb{N}$, $\epsilon \in (0,1]$. That is, we have
\[
\|U_\epsilon(t,\cdot)\|_{L^2(\G)} \lesssim \epsilon^{k}\,,
\]
for all $k \in \mathbb{N}$, and the last shows that the net $(u_\epsilon)_\epsilon$ is the unique very weak solution to the Cauchy problem \eqref{kg.eq}.
\end{proof}
Alternative to Theorem \ref{thm.uniq.kg.1} conditions on the nets $(m_\epsilon)_\epsilon, (\tilde{m}_\epsilon)_\epsilon$ that guarantee the very weakly well-posedness of \eqref{kg.eq} are given in the following theorem.

\begin{theorem}\label{thm.uniq.kg.2}
Let $Q>\nu s$, and suppose that $(u_0,u_1) \in \{H^{\frac{s\nu}{2}}(\G) \cup \mathcal{E}^{'}(\G)\} \times \{L^2(\G) \cup \mathcal{E}^{'}(\G)\}$. Then, the very weak solution to the Cauchy problem \eqref{kg.eq} is $(L^\infty(\G),L^{\frac{2Q}{\nu s}}(\G))$-unique. Moreover, 
the very weak solution to the Cauchy problem \eqref{kg.eq} is also 
$(L^{\frac{2Q}{\nu s}}(\G)\cap L^{\frac{Q}{\nu s}}(\G),L^{\frac{2Q}{\nu s}}(\G))$-unique
and
$(L^{\frac{2Q}{\nu s}}(\G)\cap L^{\frac{Q}{\nu s}}(\G),L^\infty(\G))$-unique.
\end{theorem}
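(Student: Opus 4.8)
The plan is to follow the same Duhamel-based strategy as in the proof of Theorem \ref{thm.uniq.kg.1}, but now using the energy estimate \eqref{prop2.claim} from Proposition \ref{prop2} in place of \eqref{prop1.claim} wherever the moderateness of $(m_\epsilon)_\epsilon$ is measured in the $L^{\frac{2Q}{\nu s}}(\G)\cap L^{\frac{Q}{\nu s}}(\G)$-norm, and pairing this with an appropriate H\"older bound for the source term $f_\epsilon$. Concretely, set $U_\epsilon(t,\cdot):=u_\epsilon(t,\cdot)-\tilde u_\epsilon(t,\cdot)$; then $U_\epsilon$ solves the Cauchy problem \eqref{kg.uniq} with right-hand side $f_\epsilon(t,x)=(\tilde m_\epsilon(x)-m_\epsilon(x))\tilde u_\epsilon(t,x)$ and zero initial data. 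By Duhamel's principle $U_\epsilon(t,x)=\int_0^t V_\epsilon(t,x;\sigma)\,d\sigma$ with $V_\epsilon$ solving the homogeneous problem \eqref{hom.Ve}, and Minkowski's integral inequality gives $\|U_\epsilon(t,\cdot)\|_{L^2(\G)}\le\int_0^T\|V_\epsilon(t,\cdot;\sigma)\|_{L^2(\G)}\,d\sigma$.

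For the first claim, $(L^\infty(\G),L^{\frac{2Q}{\nu s}}(\G))$-uniqueness, I would apply the energy estimate \eqref{prop1.claim} to $V_\epsilon$ (valid since $(m_\epsilon)_\epsilon$ is $L^\infty$-moderate), obtaining
\[
\|V_\epsilon(t,\cdot;\sigma)\|_{L^2(\G)}\lesssim (1+\|m_\epsilon\|_{L^\infty(\G)})\,\|f_\epsilon(\sigma,\cdot)\|_{L^2(\G)}.
\]
The key point is to estimate $\|f_\epsilon(\sigma,\cdot)\|_{L^2(\G)}$ using the $L^{\frac{2Q}{\nu s}}$-negligibility of $(m_\epsilon-\tilde m_\epsilon)_\epsilon$: by H\"older's inequality and the Sobolev embedding \eqref{inclusions} (exactly as in \eqref{prop2.Hold}--\eqref{prop2.hold1} with exponents $q=\frac{Q}{Q-\nu s}$, $q'=\frac{2Q}{\nu s}$, noting $q_0=\frac{2Q}{Q-\nu s}$ has $\tilde q_0=2$),
\[
\|f_\epsilon(\sigma,\cdot)\|_{L^2(\G)}\le \|\tilde m_\epsilon-m_\epsilon\|_{L^{\frac{2Q}{\nu s}}(\G)}\,\|\tilde u_\epsilon(\sigma,\cdot)\|_{L^{\frac{2Q}{Q-\nu s}}(\G)}\lesssim \|\tilde m_\epsilon-m_\epsilon\|_{L^{\frac{2Q}{\nu s}}(\G)}\,\|\tilde u_\epsilon(\sigma,\cdot)\|_{H^{\frac{s\nu}{2}}(\G)}.
\]
Since $(\tilde u_\epsilon)_\epsilon$ is $C^1$-moderate and $(m_\epsilon)_\epsilon$ is $L^\infty$-moderate (hence $\le \epsilon^{-N_1}$), while $\|\tilde m_\epsilon-m_\epsilon\|_{L^{\frac{2Q}{\nu s}}(\G)}\lesssim \epsilon^k$ for every $k$, the product is $O(\epsilon^{k-N_1-N_2})$ for all $k$, hence $O(\epsilon^\ell)$ for all $\ell\in\mathbb N$, and integrating in $\sigma\in[0,T]$ preserves this. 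The remaining two claims are handled the same way, only swapping the energy estimate for $V_\epsilon$: when $(m_\epsilon)_\epsilon$ is $L^{\frac{2Q}{\nu s}}(\G)\cap L^{\frac{Q}{\nu s}}(\G)$-moderate one uses \eqref{prop2.claim} instead of \eqref{prop1.claim} to control $\|V_\epsilon\|_{L^2(\G)}$, and the source-term estimate uses either the $L^{\frac{2Q}{\nu s}}$- or the $L^\infty$-negligibility of $(m_\epsilon-\tilde m_\epsilon)_\epsilon$ (for the $L^\infty$ case one simply bounds $\|f_\epsilon(\sigma,\cdot)\|_{L^2(\G)}\le\|\tilde m_\epsilon-m_\epsilon\|_{L^\infty(\G)}\|\tilde u_\epsilon(\sigma,\cdot)\|_{L^2(\G)}$ as in Theorem \ref{thm.uniq.kg.1}). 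In all three cases the powers of $\epsilon$ coming from moderateness of $m_\epsilon$ and $C^1$-moderateness of $\tilde u_\epsilon$ are finite and fixed, so the super-polynomial decay of the negligible difference dominates.

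The main obstacle — really the only subtle point — is matching the H\"older exponents with the Sobolev embedding so that the norm of $\tilde u_\epsilon$ that appears is exactly one controlled by $C^1$-moderateness, i.e. an $H^{\frac{s\nu}{2}}(\G)$- or $L^2(\G)$-norm; this forces the choice $q'=\frac{2Q}{\nu s}$ in the H\"older pairing (explaining why the negligibility is measured in $L^{\frac{2Q}{\nu s}}$) and requires the hypothesis $Q>\nu s$ for the embedding $\dot L^2_{s\nu/2}(\G)\subset L^{\frac{2Q}{Q-\nu s}}(\G)$ to make sense. Once this bookkeeping is set up, everything else is a routine repetition of the Duhamel/Minkowski argument already carried out in Theorem \ref{thm.uniq.kg.1}, and I would present the three cases compactly, writing out the first in full and indicating that the other two follow by the identical scheme with the indicated substitutions.
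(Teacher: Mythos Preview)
Your proposal is correct and follows essentially the same approach as the paper: the paper also only writes out the $(L^\infty(\G),L^{\frac{2Q}{\nu s}}(\G))$-case, reduces via Duhamel and Minkowski to the bound $\|U_\epsilon(t,\cdot)\|_{L^2(\G)}\lesssim (1+\|m_\epsilon\|_{L^\infty(\G)})\int_0^T\|f_\epsilon(\sigma,\cdot)\|_{L^2(\G)}\,d\sigma$, and then uses H\"older together with the Sobolev embedding \eqref{inclusions} to get $\|f_\epsilon(\sigma,\cdot)\|_{L^2(\G)}\le \|\tilde m_\epsilon-m_\epsilon\|_{L^{\frac{2Q}{\nu s}}(\G)}\|\R^{s/2}\tilde u_\epsilon(\sigma,\cdot)\|_{L^2(\G)}$, concluding by $C^1$-moderateness and negligibility exactly as you do. One small cosmetic slip: your parenthetical exponents $q=\frac{Q}{Q-\nu s}$, $q'=\frac{2Q}{\nu s}$ are not conjugate; the correct H\"older pairing for $\|gh\|_{L^2}$ is $p_1=\frac{2Q}{\nu s}$, $p_2=\frac{2Q}{Q-\nu s}$ with $\frac{1}{p_1}+\frac{1}{p_2}=\frac12$, which is precisely the displayed inequality you actually use.
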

\begin{proof}
We will only prove the $(L^\infty(\G),L^{\frac{2Q}{\nu s}}(\G))$-uniqueness as the other two uniqueness statements are similar. Proceeding as we did in the proof of Theorem \ref{thm.uniq.kg.1}, we arrive at \begin{multline*}
    \|U_\epsilon(t,\cdot)\|_{L^2(\G)} \lesssim (1+\|m_\epsilon\|_{L^{\infty}(\G)})\int_{0}^{T}\|f_\epsilon(\sigma,\cdot)\|_{L^2(\G)} d \sigma
    \\ =(1+\|m_\epsilon\|_{L^{\infty}(\G)}) \int_{0}^{T}\|(\tilde{m}_\epsilon-m_\epsilon)(\cdot)\tilde{u}_\epsilon(\sigma,\cdot)\|_{L^2(\G)}d \sigma.
 \end{multline*}
for all $t\in[0, T]$.
Additionally, by applying H\"older's inequality, together with the Sobolev embeddings \eqref{inclusions}, we have 
    \[
    \|(\tilde{m}_\epsilon-m_\epsilon)(\cdot)\tilde{u}_\epsilon(t,\cdot)\|_{L^2(\G)} \leq \|\tilde{m}_\epsilon-m_\epsilon\|_{L^{\frac{2Q}{\nu s}}(\G)}\|\R^{\frac{s}{2}}\tilde{u}_\epsilon(t,\cdot)\|_{L^2(\G)}\,,
    \]
    where since $(\tilde{u}_\epsilon)$, as being the very weak solution corresponding to the $L^{\infty}(\G)$-moderate net $(\tilde{m}_\epsilon)_\epsilon$, is $C^1$-moderate, we have 
    \[
    \|\R^{\frac{s}{2}}\tilde{u}_\epsilon(t,\cdot)\|_{L^2(\G)} \lesssim \epsilon^{-N_1}\,,\quad \text{for some}\,N_1 \in \mathbb{N}\,.
    \]
    Summarising the above, and since
    \[
    \|\tilde{m}_\epsilon-m_\epsilon\|_{L^{\frac{2Q}{\nu s}}(\G)}\lesssim \epsilon^{N}\,,\quad \forall N \in \mathbb{N}\,,
    \]we obtain 
    \[
    \|U_\epsilon(t,\cdot)\|_{L^2(\G)}\lesssim \epsilon^k\,,\quad \forall k \in \mathbb{N}\,,
    \]
uniformly in $t$, and this finishes the proof of Theorem \ref{thm.uniq.kg.2}.
\end{proof}

\section{Consistency of the very weak solution with the classical one}

The next theorems stress the conditions, on the coefficient $m$ and on the initial data $u_0,u_1$, under which, the classical solution to the Cauchy problem \eqref{kg.eq} can be recaptured by its very weak solution. In the statements below, we understand the classical solutions as those given by Proposition \ref{prop1.clas.kg} or Proposition \ref{prop2}, depending on the assumptions. By the `regularisations' $m_\epsilon=m*\psi_\epsilon$ below we understand the convolution with non-negative Friedrichs mollifiers $\psi\geq 0$.

\begin{theorem}\label{thm.consis.1}
 Let $Q > \nu s$. Consider the Cauchy problem \eqref{kg.eq}, and let $(u_0,u_1) \in H^{\frac{s \nu}{2}}(\G)\times L^2(\G)$. Assume also that $m \in L^{\frac{2Q}{\nu s}}(\G)\cap L^{\frac{Q}{\nu s}}(\G) $, $m\geq 0$, and that $(m_\epsilon)_\epsilon$, is a regularisation of the coefficient $m$. Then the regularised net $(u_\epsilon)_\epsilon$ converges, as $\epsilon \rightarrow 0$, in  $L^2(\G)$ to the classical solution $u$ given by Proposition \ref{prop2}.
\end{theorem}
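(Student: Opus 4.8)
The plan is to compare the regularised net $(u_\epsilon)_\epsilon$ with the classical solution $u$ by deriving an equation for their difference and estimating it via the energy method, exactly as in the proofs of Propositions \ref{prop1.clas.kg} and \ref{prop2} and of Theorems \ref{thm.uniq.kg.1}--\ref{thm.uniq.kg.2}. Set $W_\epsilon(t,x):=u_\epsilon(t,x)-u(t,x)$. Subtracting the two equations \eqref{kg.eq} and \eqref{kg.reg}, one finds that $W_\epsilon$ solves a Cauchy problem with zero initial data,
\[
\partial_t^2 W_\epsilon + \mathcal{R}^s W_\epsilon + m_\epsilon W_\epsilon = (m-m_\epsilon)u =: g_\epsilon\,,\qquad W_\epsilon(0,\cdot)=0,\ \partial_t W_\epsilon(0,\cdot)=0\,.
\]
Note that $m_\epsilon=m*\psi_\epsilon\geq 0$, and the regularisation of $m\in L^{\frac{2Q}{\nu s}}(\G)\cap L^{\frac{Q}{\nu s}}(\G)$ by a Friedrichs mollifier is still a net with the same bounds; in particular $\sup_\epsilon\|m_\epsilon\|_{L^{\frac{2Q}{\nu s}}}<\infty$ and $\sup_\epsilon\|m_\epsilon\|_{L^{\frac{Q}{\nu s}}}<\infty$, so Proposition \ref{prop2} applies uniformly to $(u_\epsilon)_\epsilon$.

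Next I would run the Duhamel representation as in Theorem \ref{thm.uniq.kg.1}: writing $V_\epsilon(t,x;\sigma)$ for the solution of the homogeneous problem with mass $m_\epsilon$ and Cauchy data $V_\epsilon(\sigma,\cdot;\sigma)=0$, $\partial_t V_\epsilon(\sigma,\cdot;\sigma)=g_\epsilon(\sigma,\cdot)$, we have $W_\epsilon(t,\cdot)=\int_0^t V_\epsilon(t,\cdot;\sigma)\,d\sigma$, and by Minkowski's integral inequality together with the $L^2$-energy estimate \eqref{prop2.claim},
\[
\|W_\epsilon(t,\cdot)\|_{L^2(\G)} \lesssim \left(1+\|m_\epsilon\|_{L^{\frac{2Q}{\nu s}}}\right)\left(1+\|m_\epsilon\|_{L^{\frac{Q}{\nu s}}}\right)^{1/2}\int_0^T \|g_\epsilon(\sigma,\cdot)\|_{L^2(\G)}\,d\sigma\,.
\]
Then I apply Hölder's inequality and the Sobolev embedding \eqref{inclusions} (as in the passage leading to \eqref{prop2.est.hold2}) to bound
\[
\|g_\epsilon(\sigma,\cdot)\|_{L^2(\G)} = \|(m-m_\epsilon)(\cdot)u(\sigma,\cdot)\|_{L^2(\G)} \lesssim \|m-m_\epsilon\|_{L^{\frac{2Q}{\nu s}}(\G)}\,\|\mathcal{R}^{\frac{s}{2}}u(\sigma,\cdot)\|_{L^2(\G)}\,,
\]
and $\sup_{\sigma\in[0,T]}\|\mathcal{R}^{\frac{s}{2}}u(\sigma,\cdot)\|_{L^2(\G)}$ is finite since $u\in C([0,T];H^{\frac{s\nu}{2}}(\G))$ by Proposition \ref{prop2}. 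Combining these, $\|W_\epsilon(t,\cdot)\|_{L^2(\G)}\lesssim \|m-m_\epsilon\|_{L^{\frac{2Q}{\nu s}}(\G)}$, uniformly in $t\in[0,T]$.

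The conclusion then follows from the fact that $m_\epsilon=m*\psi_\epsilon\to m$ in $L^{\frac{2Q}{\nu s}}(\G)$ as $\epsilon\to 0$, a standard property of mollification with a Friedrichs mollifier (note $\frac{2Q}{\nu s}<\infty$, so convergence in norm holds). The \textbf{main obstacle} is ensuring that all the uniform-in-$\epsilon$ bounds on $(m_\epsilon)_\epsilon$ really hold: one must check that convolution with a compactly supported mollifier $\psi_\epsilon$ does not inflate the $L^{\frac{2Q}{\nu s}}$ and $L^{\frac{Q}{\nu s}}$ norms of $m$ (Young's inequality on $\G$ with $\|\psi_\epsilon\|_{L^1(\G)}=\|\psi\|_{L^1(\G)}$ gives this, using the left-invariance of the Haar measure), and that $m_\epsilon\geq 0$, which is exactly why non-negative mollifiers are required. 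Everything else is a direct transcription of the energy arguments already carried out in the paper, so there is no genuinely new difficulty beyond bookkeeping the constants.
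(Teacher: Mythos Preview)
Your proposal is correct and follows essentially the same route as the paper's proof: both derive the equation for the difference $u-u_\epsilon$ with source $(m_\epsilon-m)u$, apply Duhamel's principle together with the energy estimate \eqref{prop2.claim}, and then use H\"older's inequality plus the Sobolev embedding to reduce matters to $\|m_\epsilon-m\|_{L^{2Q/(\nu s)}}\to 0$. Your additional justification of the uniform $L^p$ bounds on $m_\epsilon$ via Young's inequality is a detail the paper leaves implicit.
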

\begin{proof}
	Let $u$ be the classical solution of \eqref{kg.eq} given by Proposition \ref{prop2}, and let $(u_\epsilon)$ be the very weak solution of the regularised analogue of it as in \eqref{kg.reg}. Then, we get 
	\[
	\begin{cases*}
		\partial_{t}^{2}(u-u_\epsilon)(t,x)+\R^s(u-u_\epsilon)(t,x)+m_\epsilon(x)(u-u_\epsilon)(t,x)=\eta_\epsilon(t,x), \\
		(u-u_\epsilon)(0,x)=0\,,\partial_{t}(u-u_\epsilon)(0,x)=0\,,
	\end{cases*}
	\]
	where $(t,x)\in [0,T] \times \G$, and $\eta_\epsilon(t,x):=(m_\epsilon(x)-m(x))u(t,x)$. If we denote by $U_\epsilon$ the difference $U_\epsilon(t,x):=(u-u_\epsilon)(t,x)$, the above can be rewritten equivalently as
	\begin{equation}\label{eq.m=0}
		\begin{cases*}
			\partial_{t}^{2}U_\epsilon(t,x)+\R^s U_\epsilon(t,x)+m_\epsilon(x)U_\epsilon(t,x)=\eta_\epsilon(t,x), \\
			U_\epsilon(0,x)=0\,,\partial_{t}U_\epsilon(0,x)=0\,.
		\end{cases*}
	\end{equation}
	
Therefore, if we denote by $W_\epsilon(t, x; \sigma)$ the solution to the corresponding homogeneous problem with the initial data at $\{t=\sigma\}\times \G$
\[
W_\epsilon(t,x;\sigma)=0\,,\quad \text{and}\quad \partial_{t} W_\epsilon(t,x; \sigma)=\eta_\epsilon(\sigma,x)\,,
\] then by Proposition \ref{prop2} we get
\begin{multline*}\label{duh,in.d}
	 \|W_\epsilon(t,\cdot;\sigma)\|_{L^2(\G)}
	  \lesssim  (1+\|m_\epsilon\|_{L^{\frac{2Q}{\nu s}}(\G)}) \left(1+\|m_\epsilon\|_{L^{\frac{Q}{\nu s}}(\G)} \right)^{\frac{1}{2}} \|\eta_\epsilon(\sigma,\cdot)\|_{L^2(\G)}\nonumber\\
	  \leq  (1+\|m_\epsilon\|_{L^{\frac{2Q}{\nu s}}(\G)}) \left(1+\|m_\epsilon\|_{L^{\frac{Q}{\nu s}}(\G)} \right)^{\frac{1}{2}} \|m_\epsilon-m\|_{L^{\frac{2Q}{\nu s}}(\G)}\|\R^{\frac{s}{2}}u(\sigma,\cdot)\|_{L^2(\G)}\,,
	 \end{multline*}
 uniformly in $t \in [\sigma,T]$ and $\sigma \in [0,T]$, where we apply H\"older's inequality and the Sobolev embeddings \eqref{inclusions}. Since $m \in L^{\frac{2Q}{\nu s}}(\G)$, we have $\|m_\epsilon-m\|_{L^{\frac{2Q}{\nu s}}(\G)}\rightarrow 0$, so that  taking the limit of the above as $\epsilon \rightarrow 0$, we get 
 \begin{equation}
     \label{duh.e.0}
      \|W_\epsilon(t,\cdot;\sigma)\|_{L^2(\G)} \rightarrow 0\,,
 \end{equation}
 uniformly in $t \in [\sigma,T]$ and $\sigma \in [0,T]$. Now, Duhamel's principle allows us to express the solution to the inhomogeneous problem with respect to the homogeneous one as  
	 \begin{equation}\label{duh}
	 	U_\epsilon(t,x)=\int_{0}^{t}W_\epsilon(t,x;\sigma)\,d\sigma\,,\end{equation} 
 so that, by \eqref{duh.e.0}, \eqref{duh}, and Minkowski's integral inequality
 \[
 \| \int_{0}^{t}W_{\epsilon}(t,\cdot;\sigma)\,d\sigma \|_{L^2(\G)}\leq \int_{0}^{t}\|W_\epsilon(t,\cdot;\sigma)\|_{L^2(\G)}\,d\sigma\,,
 \]
 we obtain
 \begin{eqnarray*}\label{thm.final.ineq}
 	\|U_\epsilon(t,\cdot)\|_{L^2(\G)}  \leq T \sup_{\sigma \in [0,T]} \|W_\epsilon(t,\cdot; \sigma)\|_{L^2(\G)}\rightarrow 0\,,\quad \text{as}\quad \epsilon \rightarrow 0.
 \end{eqnarray*}
This means that $u_\epsilon \rightarrow u$ with respect to $L^2(\G)$-norm, and this finishes the proof of Theorem \ref{thm.consis.1}.
\end{proof}

In the following theorem we denote by $C_{0}(\G)$ the space of continuous functions on $\G$ vanishing at infinity, that is, such that for every $\epsilon>0$ there exists a compact set $K$ outside of which we have $|f|<\delta$. 

\begin{theorem}\label{thm.consis.2}
Consider the Cauchy problem \eqref{kg.eq}, and let $(u_0,u_1) \in H^{\frac{s \nu}{2}}(\G)\times L^2(\G)$. Assume also that $m \in C_0(\G)$, $m\geq 0$, and that $(m_\epsilon)_\epsilon$, $m_\epsilon\geq 0$, is a regularisation of the coefficient $m$. Then the regularised net $(u_\epsilon)_\epsilon$ converges, as $\epsilon \rightarrow 0$, in $L^2(\G)$ to the classical solution $u$ given by Proposition \ref{prop1.clas.kg}. 
\end{theorem}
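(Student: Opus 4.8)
The plan is to mimic the structure of the proof of Theorem \ref{thm.consis.1}, but now replacing the $L^{\frac{2Q}{\nu s}}$-based estimates with $L^\infty$-based ones, using that $m \in C_0(\G)$. First I would set $U_\epsilon(t,x):=(u-u_\epsilon)(t,x)$, where $u$ is the classical solution from Proposition \ref{prop1.clas.kg} and $(u_\epsilon)_\epsilon$ solves the regularised problem \eqref{kg.reg} with coefficient $m_\epsilon=m*\psi_\epsilon$. Subtracting the equations gives
\[
\begin{cases*}
\partial_{t}^{2}U_\epsilon(t,x)+\R^s U_\epsilon(t,x)+m_\epsilon(x)U_\epsilon(t,x)=\eta_\epsilon(t,x),\\
U_\epsilon(0,x)=0\,,\quad \partial_{t}U_\epsilon(0,x)=0\,,
\end{cases*}
\]
with $\eta_\epsilon(t,x):=(m_\epsilon(x)-m(x))u(t,x)$. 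By Duhamel's principle, $U_\epsilon(t,x)=\int_0^t W_\epsilon(t,x;\sigma)\,d\sigma$, where $W_\epsilon(\cdot,\cdot;\sigma)$ solves the homogeneous Klein--Gordon equation with coefficient $m_\epsilon$ and data $W_\epsilon|_{t=\sigma}=0$, $\partial_t W_\epsilon|_{t=\sigma}=\eta_\epsilon(\sigma,\cdot)$.

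Next I would invoke the energy estimate \eqref{prop1.claim} from Proposition \ref{prop1.clas.kg} to control $W_\epsilon$: since $m_\epsilon\geq 0$ and $m_\epsilon\in L^\infty(\G)$, we get
\[
\|W_\epsilon(t,\cdot;\sigma)\|_{L^2(\G)}\lesssim (1+\|m_\epsilon\|_{L^\infty(\G)})\,\|\eta_\epsilon(\sigma,\cdot)\|_{L^2(\G)},
\]
uniformly in $t\in[\sigma,T]$ and $\sigma\in[0,T]$. Here the key point is that $\|m_\epsilon\|_{L^\infty(\G)}\leq\|m\|_{L^\infty(\G)}$ (Young's inequality for convolution, since $\int\psi_\epsilon=1$, $\psi_\epsilon\geq 0$), so the prefactor is bounded uniformly in $\epsilon$. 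For the source term, $\|\eta_\epsilon(\sigma,\cdot)\|_{L^2(\G)}\leq\|m_\epsilon-m\|_{L^\infty(\G)}\,\|u(\sigma,\cdot)\|_{L^2(\G)}$, and by Proposition \ref{prop1.clas.kg} the norm $\|u(\sigma,\cdot)\|_{L^2(\G)}$ is bounded uniformly in $\sigma\in[0,T]$.

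The main obstacle — and the only place where the hypothesis $m\in C_0(\G)$ (rather than merely $m\in L^\infty$) is essential — is establishing that $\|m_\epsilon-m\|_{L^\infty(\G)}\to 0$ as $\epsilon\to 0$. This is exactly the statement that a function vanishing at infinity is \emph{uniformly continuous} and that Friedrichs mollification converges uniformly for such functions: given $\delta>0$, choose a compact $K$ with $|m|<\delta$ off $K$; on a neighbourhood of $K$ the function $m$ is uniformly continuous, so $\sup_x|m*\psi_\epsilon(x)-m(x)|$ can be made small once $\epsilon$ is small enough that the support of $\psi_\epsilon$ (which shrinks homogeneously via $D_{\epsilon^{-1}}$) lies inside a fixed small ball; off $K$ one controls $|m*\psi_\epsilon|$ by $\delta$ directly. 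One should be slightly careful that the group convolution and the dilation structure behave well here, but this is standard on homogeneous Lie groups. Once this uniform convergence is in hand, we conclude $\|\eta_\epsilon(\sigma,\cdot)\|_{L^2(\G)}\to 0$ uniformly in $\sigma$, hence $\sup_{\sigma\in[0,T]}\|W_\epsilon(t,\cdot;\sigma)\|_{L^2(\G)}\to 0$, and finally, by Minkowski's integral inequality,
\[
\|U_\epsilon(t,\cdot)\|_{L^2(\G)}\leq T\sup_{\sigma\in[0,T]}\|W_\epsilon(t,\cdot;\sigma)\|_{L^2(\G)}\longrightarrow 0\quad\text{as }\epsilon\to 0,
\]
uniformly in $t\in[0,T]$, which is the claimed convergence $u_\epsilon\to u$ in $L^2(\G)$.
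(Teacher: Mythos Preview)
Your proposal is correct and follows essentially the same route as the paper's proof: subtract the equations, apply Duhamel, use the $L^\infty$-based energy estimate from Proposition~\ref{prop1.clas.kg} on the homogeneous pieces $W_\epsilon$, bound the prefactor $1+\|m_\epsilon\|_{L^\infty}$ uniformly in $\epsilon$, and use $\|m_\epsilon-m\|_{L^\infty(\G)}\to 0$ (which the paper obtains by citing Lemma~3.1.58 in \cite{FR16} rather than sketching the $C_0$/uniform continuity argument directly, but this is the same fact). The conclusion via Minkowski's integral inequality is identical.
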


Before giving the proof of Theorem \ref{thm.consis.2}, let us make the following observation: If $m \in C_0(\G)$, then $\|m_\epsilon\|_{L^\infty(\G)}\leq C<\infty$, uniformly in $\epsilon \in (0,1]$. 

\begin{proof}[Proof of Theorem \ref{thm.consis.2}]

First observe that for $m$, $(m_\epsilon)_\epsilon$ as in the hypothesis, we have $m_\epsilon \in L^{\infty}(\G)$ for each $\epsilon \in (0,1]$. Now, as in \eqref{eq.m=0}, if we denote by $W_\epsilon$ the solution to the problem 
\begin{equation*}
\begin{cases}
\partial_{t}^{2}W_\epsilon(t, x; \sigma)+\R^s W_\epsilon(t, x; \sigma)+m_\epsilon(x)W_\epsilon(t, x; \sigma)=0\,,\\
W_\epsilon(t, x; \sigma)=0\,,\partial_t W_\epsilon(t, x; \sigma)=\eta_\epsilon(\sigma, x)\quad \text{on}\quad \{t=\sigma\}\times \G\,,
\end{cases}
\end{equation*}
where $\eta_\epsilon(t,x):=(m_\epsilon(x)-m(x))u(t,x)$, then by Proposition \ref{prop1.clas.kg} we obtain
\begin{eqnarray*}
\|W_\epsilon(t,\cdot; \sigma)\|_{L^2(\G)}& \lesssim & (1+\|m_\epsilon\|_{L^{\infty}(\G)})\|\eta_\epsilon(\sigma,\cdot)\|_{L^2(\G)}\nonumber\\
        & \leq & (1+\|m_\epsilon\|_{L^{\infty}(\G)}) \|m_\epsilon-m\|_{L^{\infty}(\G)}\|u(\sigma,\cdot)\|_{L^2(\G)}\,,
\end{eqnarray*}
uniformly in $t \in [\sigma,T]$ and $\sigma \in [0,T]$. Now, by Lemma 3.1.58 in \cite{FR16} we have 
    \[
    \|m_\epsilon-m\|_{L^{\infty}(\G)} \rightarrow 0\,,\quad \text{as}\quad \epsilon \rightarrow 0\,,
    \]
    so that by the above we get 
    \begin{equation}
        \label{thm.cons2.infty}
         \|W_\epsilon(t,\cdot; \sigma)\|_{L^2(\G)}\rightarrow 0\,,\quad \text{as}\quad \epsilon \rightarrow 0\,,
\end{equation}
uniformly in $t \in [\sigma,T]$ and $\sigma \in [0,T]$. Finally, by Duhamel's principle if $U_\epsilon$ is the solution to the non-homogeneous problem \eqref{eq.m=0}, then by \eqref{thm.cons2.infty} we get 
    \[
    \|U_\epsilon(t,\cdot)\|_{L^2(\G)}\rightarrow 0\,,
    \]
    and this completes the proof  of Theorem \ref{thm.consis.2}.
\end{proof}

\begin{remark}\label{finrem}
We note that in Theorem 4 in the paper \cite{ARST21a}, one wrote the assumption that $m \in L^{\infty}(\mathbb R^d)$ in the consistency result. This may be not sufficient in general. Indeed, to be more accurate, it is better to ask $m$ to be in the 
subspace $C_0(\mathbb R^d)$ of $L^{\infty}(\mathbb R^d)$.
In this way we obtain a correction to the statement of Theorem 4 in  \cite{ARST21a} as a special case of Theorem \ref{thm.consis.2} with 
$\G=\mathbb R^d$ and $\mathcal{R}$ being the positive Laplacian $-\Delta$.
\end{remark}

\end{document}